\DeclareSymbolFont{largesymbols}{OMX}{yhex}{m}{n}
\DeclareMathAccent{\widehat}{\mathord}{largesymbols}{"62}
\newcommand{\Q}{{\mathbb Q}}
\newcommand{\Z}{{\mathbb Z}}
\newcommand{\N}{{\mathbb N}}
\newcommand{\R}{{\mathbb R}}
\newcommand{\U}{\mathcal{U}}
\renewcommand{\O}{\mathcal{O}}
\newcommand{\NN}{\mathcal{N}}
\newcommand{\textfrc}[1]{{\frcseries#1}}
\newcommand{\cc}[1]{\textfrc{c}^{#1}}
\newcommand{\Gal}{\textnormal{Gal}}
\newcommand{\Cen}{\textnormal{Cen}}
\newcommand{\Aut}{\textnormal{Aut}}
\newcommand{\GL}{\textnormal{GL}}
\newcommand{\SL}{\textnormal{SL}}
\newcommand{\lcm}{\textnormal{lcm}}
\newcommand{\Bass}[3]{u_{#1,#2}(#3)}
\newcommand{\suma}[1]{\widehat{#1}}
\newcommand{\inv}{{^{-1}}}
\newcommand{\GEN}[1]{\left\langle #1 \right\rangle}
\newtheorem{theorem}{Theorem}[section]
\newtheorem{lemma}[theorem]{Lemma}
\newtheorem{corollary}[theorem]{Corollary}
\theoremstyle{definition}
\newtheorem{remark}[theorem]{Remark}
\begin{document}

\title[Central units]{Central units of integral group rings}

\author{Eric Jespers}
\address{Department of Mathematics, Vrije Universiteit Brussel,
Pleinlaan 2, 1050 Brussels, Belgium}
\email{efjesper@vub.ac.be}

\author{Gabriela Olteanu} 
\address{Department of Statistics-Forecasts-Mathematics, Babe\c s-Bolyai University,
Str. T. Mihali 58-60, 400591 Cluj-Napoca, Romania}
\email{gabriela.olteanu@econ.ubbcluj.ro}

\author{\'{A}ngel del R\'io}
\address{Departamento de Matem\'{a}ticas, Universidad de Murcia,  30100 Murcia, Spain}
\email{adelrio@um.es}

\author{Inneke Van Gelder}
\address{Department of Mathematics, Vrije Universiteit Brussel,
Pleinlaan 2, 1050 Brussels, Belgium}
\email{ivgelder@vub.ac.be}

\subjclass[2010]{16S34, 16U60, 16U70.} 

\keywords{Group ring, central unit, generators.}

\thanks{The research is partially supported by Ministerio de Ciencia y Tecnolog\'{\i}a of Spain and Fundaci\'{o}n
S\'{e}neca of Murcia, the Research Foundation Flanders (FWO - Vlaanderen), Onderzoeksraad Vrije Universiteit Brussel and by
the grant PN-II-RU-TE-2009-1 project ID\_303.}

\date{\today}

\begin{abstract}
We give an explicit description for a basis of  a subgroup of finite index in the group of central units of the integral group ring $\Z G$ of a finite
abelian-by-supersolvable  group  such that every cyclic subgroup of order not a divisor of 4 or 6 is subnormal in $G$. The basis elements turn out to
be a natural product of conjugates of Bass units. This extends and generalizes a result of Jespers, Parmenter and Sehgal showing that the Bass units
generate a subgroup of finite index in the  center $\mathcal{Z} (\U (\Z G))$ of the unit group $\U (\Z G)$ in case $G$ is a finite nilpotent group.
Next, we give a new construction of units that generate a subgroup of finite index in $\mathcal{Z}(\U(\Z G))$ for all finite strongly monomial groups
$G$. We call these units generalized Bass units. Finally, we show that the commutator group $\U(\Z G)/\U(\Z G)'$ and $\mathcal{Z}(\U(\Z G))$ have the
same rank if $G$ is a finite group such that  $\Q G$ has no epimorphic image which is either a non-commutative division algebra other than a totally
definite quaternion algebra, or a two-by-two matrix algebra over a division algebra with center either the rationals or a quadratic imaginary
extension of $\Q$. This allows us to prove that in this case the natural images of the Bass units of $\Z G$ generate a subgroup of finite index in
$\U(\Z G)/\U(\Z G)'$.
\end{abstract}

\maketitle

\section{Introduction}

Let $\mathcal{Z}(\U(\Z G))$ denote the group of central units in the integral group ring $\Z G$, for $G$ a finite group. Then $\mathcal{Z}(\U(\Z G))$
is equal to $\pm \mathcal{Z}(G) \times T$, where $T$ is a finitely generated free abelian subgroup of $\mathcal{Z}(\U(\Z G))$ \cite[Corollary
7.3.3]{2002SehgalMilies}. 

Bass proved that if $G$ is a finite cyclic group, then the so-called Bass units (also known as Bass cyclic units) generate a
subgroup of finite index in $\U(\Z G)$ \cite{bass1966}. Moreover, he described an independent set of generators using the
Bass Independence Lemma. In these investigations the cyclotomic units show up and therefore the Bass units are a natural
choice. Next, Bass and Milnor proved this result for finite abelian groups reducing to cyclic groups and using $K$-theory.
However, they did not describe an independent set of generators. Recently, these results were proved avoiding $K$-theory and
the Bass Independence Lemma, and an independent set of generators is described \cite{2012JdRVG}.

In this paper we construct a basis for a subgroup of finite index in the center of $\U (\Z G)$ provided the finite group $G$ is  abelian-by-supersolvable and has the  property that
every cyclic subgroup of order not a divisor of $4$ or $6$ is subnormal in $G$. The basis elements are constructed as a (natural) product of conjugates of Bass cyclic units. For finite nilpotent groups $G$, constructions of central units of this type
have earlier been considered (using K-theory) by   Jespers, Parmenter, Sehgal \cite{JesParSeh1996} in the context of finding finitely many generators for a subgroup of finite index in the center
of $\U (\Z G)$. Ferraz and Sim\'on in \cite{FerrazSimon2008} constructed a basis for the center of $\U (\Z G)$ in case $G$ is a metacyclic group of
order $pq$, with $p$ and $q$ two distinct odd primes.

Next, for  arbitrary finite strongly monomial groups $G$,  we construct   generalized Bass units and show that the group generated by these units contains a subgroup of finite index in $\mathcal{Z}(\mathcal{U}(\Z G))$. This generalizes a result of Jespers and Parmenter \cite{JesPar2011} on metabelian groups.

For many finite groups $G$, it was proved that the group $B_G$ generated by the Bass units and the bicyclic units (of one type) has finite index in $\U(\Z G)$ (see for example \cite{RS1989,RS1991}). In \cite{RS1989} it was proved that if $G$ is a finite nilpotent group such that $\Q G$ does not have in its Wedderburn decomposition certain types of simple algebras, called exceptional components, then $B_G$ has finite index in $\U(\Z G)$. Furthermore, Jespers and Leal have extended these results to a much larger class of groups \cite{JespersLeal1993}. It is proved that $B_{G}$ is of finite index in $\U (\Z G)$ if $G$ is a finite group such that $\Q G$ has no exceptional components and $G$ has no non-abelian homomorphic image which is fixed point free. In this paper we also show that for such groups the group generated by the bicyclic units is of finite index in the commutator group $\U (\Z G)'$. Furthermore, $\U(\Z G)/\U(\Z G)'$ and $\mathcal{Z}(\U(\Z G))$ (and $K_1(\Z G)$) have the same rank. This allows us to prove that in this case the natural images of the Bass units of $\Z G$ generate a subgroup of finite index in $\U(\Z G)/\U(\Z G)'$.

\section{Preliminaries}

We first recall the definition of Bass units, a classical construction of units in integral group rings.

Let $G$ be a finite group, $g$ an element of $G$ of order $n$ and $k$ and $m$ positive integers such that $k^m\equiv 1 \mod n$. Then the Bass unit 
based on $g$ with parameters $k$ and $m$ is
$$\Bass{k}{m}{g}=(1+g+\dots + g^{k-1})^{m}+\frac{1-k^m}{n}(1+g+\dots+g^{n-1}).$$ The elements of this form were introduced in \cite{bass1966}, 
they are units in the integral group ring $\Z G$ and satisfy the following equalities (\cite[Lemma 3.1]{2006GoncalvesPassman}):
\begin{eqnarray}
 \label{Basseq1} \Bass{k}{m}{g}&=&\Bass{k_1}{m}{g}, \mbox{ if } k\equiv k_1 \mod n, 
\end{eqnarray} and hence we allow negative integers $k$ with the obvious meaning.
\begin{eqnarray}
 \label{Basseq2} \Bass{k}{m}{g}\Bass{k}{m_1}{g}&=&\Bass{k}{m+m_1}{g},\\
 \label{Basseq3} \Bass{k}{m}{g}\Bass{k_1}{m}{g^k}&=&\Bass{kk_1}{m}{g},\\
 \label{Basseq4} \Bass{1}{m}{g}&=&1 \text{ and } \\
 \label{Basseq5} \Bass{-1}{m}{g} &=& (-g)^{-m},
\end{eqnarray}
for $g\in G$, $n=|g|$ and $k^m\equiv k_1^m \equiv k^{m_1} \equiv 1 \mod n$. By (\ref{Basseq2}) we have
\begin{equation}\label{Basseq6}
\Bass{k}{m}{g}^{i} = \Bass{k}{mi}{g}
\end{equation}
for a non-negative integer $i$ and from (\ref{Basseq1}), (\ref{Basseq3}) and (\ref{Basseq4}), we have 
\begin{eqnarray}
 \label{Basseq7} \Bass{k}{m}{g}\inv = \Bass{k_1}{m}{g^k},
\end{eqnarray}
if $kk_1\equiv 1 \mod n$. Thus an integral power of a Bass unit is a Bass unit.
Furthermore, from
(\ref{Basseq1}), (\ref{Basseq3}) and (\ref{Basseq5}) we deduce
    \begin{equation}\label{Basseq8}
    \Bass{n-k}{m}{g} = \Bass{k(n-1)}{m}{g} = \Bass{k}{m}{g} \Bass{n-1}{m}{g^k} = \Bass{k}{m}{g}g^{-km}
    \end{equation}
provided $(-1)^m\equiv 1 \mod n$.

Let $N$ be a normal subgroup of $G$. Using equations (\ref{Basseq1}) and (\ref{Basseq6}) together with the Chinese Remainder Theorem, it is easy 
to verify that a power of a Bass unit in $\Z(G/N)$ is the natural image of a Bass unit in $\Z G$.

If $R$ is an associative ring and $G$ is a group then $R*^{\alpha}_{\tau} G$ denotes the crossed product with action $\alpha:G\rightarrow \Aut(R)$ 
and twisting $\tau:G\times G \rightarrow \U(R)$ \cite{Passman1989}, i.e. $R*^{\alpha}_{\tau} G$ is the associative ring $\bigoplus_{g\in G} R u_g$
with multiplication given by the following rules: $u_g a = \alpha_g(a) u_g$ and $u_g u_h = \tau(g,h) u_{gh}$, for $a\in R$ and $g,h\in G$. Recall that
a classical crossed product is a crossed product $L*^{\alpha}_{\tau} G$, where $L/F$ is a finite Galois extension, $G = \Gal(L/F)$ is the Galois group
of $L/F$ and $\alpha$ is the natural action of $G$ on $L$. The classical crossed product $L *^{\alpha}_{\tau} G$ is denoted by $(L/F,\tau)$
\cite{Reiner1975}.

Our approach is making use of the description of the Wedderburn decomposition of the rational group algebra $\Q G$. We
shortly recall the character-free method of Olivieri, del R\'io and Sim\'on \cite{Olivieri2004} for a certain class of
groups, called strongly monomial groups.

Throughout, $G$ will be a finite group. If $H$ is a subgroup of $G$ then $N_G(H)$ denotes the normalizer of $H$ in $G$. We use the exponential
notation for conjugation: $a^b = b\inv a b$. For each $\alpha \in \Q G$, $C_G(\alpha)$ denotes the centralizer of $\alpha$ in $G$.

For a subgroup $H$ of $G$, let $\suma{H}=\frac{1}{|H|}\sum_{h\in H} h$. Clearly, $\suma{H}$ is an idempotent of $\Q G$ which is central if and only if
$H$ is normal in $G$. If $K\lhd H\leq G$ then let $$\varepsilon(H,K)=\prod_{M/K\in\mathcal{M}(H/K)}
(\suma{K}-\suma{M})=\suma{K}\prod_{M/K\in\mathcal{M}(H/K)} (1-\suma{M}),$$ where $\mathcal{M}(H/K)$ denotes the set of all minimal normal subgroups of
$H/K$. We extend this notation by setting $\varepsilon(H,H)=\suma{H}$. Clearly $\varepsilon(H,K)$ is an idempotent of the group algebra $\Q G$. Let
$e(G,H,K)$ be the sum of the distinct $G$-conjugates of $\varepsilon(H,K)$, that is, if $T$ is a right transversal of $C_G(\varepsilon(H,K))$ in $G$,
then
    $$e(G,H,K)=\sum_{t\in T}\varepsilon(H,K)^t.$$
Clearly, $e(G,H,K)$ is a central element of $\Q G$ and if the $G$-conjugates of $\varepsilon(H,K)$ are orthogonal, then $e(G,H,K)$ is a central 
idempotent of $\Q G$.

A strong Shoda pair of $G$ is a pair $(H,K)$ of subgroups of $G$ with the properties that $K\leq H\unlhd N_G(K)$, $H/K$ is cyclic and a maximal 
abelian subgroup of $N_G(K)/K$ and the different conjugates of $\varepsilon(H,K)$ are orthogonal. In this case $C_G(\varepsilon(H,K))=N_G(K)$
\cite{Olivieri2004}. 

Let $\theta$ be a linear character of a subgroup $H$ of $G$ with kernel $K$. Then the induced character $\theta^G$
is irreducible if and only if $H/K$ is cyclic and $[H,g]\cap H \not\subseteq K$ for every $g\in G\setminus H$
\cite{Shoda1933}. A pair $(H,K)$ of subgroups $K\unlhd H$ satisfying these conditions is called a Shoda pair. Therefore an
irreducible character $\chi$ of $G$ is monomial if and only if there is a Shoda pair $(H,K)$ of $G$ such that $\chi=\theta^G$
for a linear character $\theta$ of $H$ with kernel $K$. If $\chi=\theta^G$ for $\theta$ as above with $(H,K)$ a strong Shoda
pair of $G$ then we say that the character $\chi$ is strongly monomial. The group $G$ is strongly monomial if every
irreducible character of $G$  is strongly monomial.

For finite strongly monomial groups, including abelian-by-supersolvable groups, all primitive central idempotents are
realized by strong Shoda pairs, i.e. they are of the form $e(G,H,K)$, with $(H,K)$ a strong Shoda of $G$. However, different
strong Shoda pairs can contribute to the same primitive central idempotent. Indeed, let $(H_1,K_1)$ and $(H_2,K_2)$ be two
strong Shoda pairs of a finite group $G$. Then $e(G,H_1,K_1)=e(G,H_2,K_2)$ if and only if there is a $g\in G$ such that
$H_1^g\cap K_2=K_1^g\cap H_2$ \cite{Olivieri2006}. In that case we say that $(H_1,K_1)$ and $(H_2,K_2)$ are equivalent as
strong Shoda pairs of $G$. In particular, to calculate the primitive central idempotents of $G$ if $G$ is strongly monomial,
it is enough to consider only one strong Shoda pair in each equivalence class. We express this by saying that we take a
complete and non-redundant set of strong Shoda pairs.

The structure of the simple component $\Q Ge(G,H,K)$ is given in the following Theorem.

\begin{theorem}\label{SSP}\cite[Proposition 3.4]{Olivieri2004}
Let $(H,K)$ be a strong Shoda pair and let $k=[H:K]$, $N=N_G(K)$, $n=[G:N]$, $yK$ a generator of $H/K$ and $\phi:N/H\rightarrow N/K$ a left inverse 
of the canonical projection $N/K\rightarrow N/H$. Then $\Q Ge(G,H,K)$ is isomorphic to $M_n(\Q(\zeta_k)*_{\tau}^{\alpha} N/H)$ and the action and
twisting are given by
\begin{eqnarray*}
\alpha_{nH}(\zeta_k) &=& \zeta_k^i, \mbox{ if } yK^{\phi(nH)}=y^iK ,\\
\tau(nH,n'H) &=& \zeta_k^j, \mbox{ if }  \phi(nn'H)\inv\phi(nH)\phi(n'H)=y^jK,
\end{eqnarray*}
for $nH,n'H\in N/H$ and integers $i$ and $j$.
\end{theorem}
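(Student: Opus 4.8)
The plan is to first describe the component $\Q N\varepsilon$, where $N=N_G(K)$ and $\varepsilon=\varepsilon(H,K)$, and then to pass to $\Q G\,e(G,H,K)$ by an induction argument. Since $K\unlhd N$ and $H\unlhd N$, conjugation by elements of $N$ permutes the minimal normal subgroups of $H/K$ and hence fixes $\varepsilon$, so $\varepsilon$ is central in $\Q N$. Because $\suma{K}$ absorbs $K$ on the left, the assignment $h\mapsto h\varepsilon$ factors through $H/K$, so $\Q H\varepsilon=\Q[H/K]\varepsilon$. Identifying $\Q[H/K]\cong\prod_{d\mid k}\Q(\zeta_d)$ (with $H/K$ cyclic of order $k$), one checks that $\varepsilon$ is exactly the idempotent supported on the faithful characters of $H/K$, whence $\Q H\varepsilon\cong\Q(\zeta_k)$ by an isomorphism sending $yK$ to $\zeta_k$.

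Next I would build the crossed product structure on $\Q N\varepsilon$. For $nH\in N/H$ pick any preimage $n^\ast\in N$ of $\phi(nH)\in N/K$ and set $u_{nH}=n^\ast\varepsilon$; this is independent of the choice of $n^\ast$ because $K$ acts trivially on $\varepsilon$ from the left. Then $\Q N\varepsilon=\bigoplus_{nH\in N/H}(\Q H\varepsilon)\,u_{nH}$, each $u_{nH}$ is invertible, and $\Q N\varepsilon$ is a crossed product $\Q(\zeta_k)*^{\alpha}_{\tau} N/H$. Working out $u_{nH}a=\alpha_{nH}(a)u_{nH}$ from conjugation and $u_{nH}u_{n'H}=\tau(nH,n'H)u_{nn'H}$, and using that $\phi(nn'H)\inv\phi(nH)\phi(n'H)\in H/K$ since $\phi$ is a section of $N/K\to N/H$, one recovers exactly the formulas in the statement after transporting everything along $\Q H\varepsilon\cong\Q(\zeta_k)$. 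This bookkeeping step — keeping track of left versus right actions and of the conjugation convention $a^b=b\inv ab$ — is where errors are most likely to creep in, though no real difficulty arises.

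To identify the algebra, observe that $N/H$ acts faithfully on $\Q(\zeta_k)$: if $\alpha_{nH}=\mathrm{id}$ then $\phi(nH)$ centralizes the generator $yK$ of $H/K$, so $\GEN{H/K,\phi(nH)}$ is abelian in $N/K$ and contains $H/K$; maximality of $H/K$ as an abelian subgroup of $N/K$ forces $\phi(nH)\in H/K$, i.e. $nH=H$. Hence $\Q N\varepsilon\cong\Q(\zeta_k)*^{\alpha}_{\tau} N/H$ is a classical crossed product $(\Q(\zeta_k)/F,\tau)$ with $F=\Q(\zeta_k)^{N/H}$ and $\Gal(\Q(\zeta_k)/F)\cong N/H$, hence a central simple $F$-algebra; in particular $\varepsilon$ is a primitive central idempotent of $\Q N$.

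Finally I would induce up. Write $e=e(G,H,K)=\sum_{t\in T}\varepsilon^t$ with $T$ a right transversal of $N=C_G(\varepsilon)$ in $G$ and $n=[G:N]$; the conjugates $\varepsilon^t$ are orthogonal by the strong Shoda pair hypothesis. Using $g\varepsilon=\varepsilon^{g\inv}g$ one gets $\varepsilon g\varepsilon=\varepsilon\varepsilon^{g\inv}g$, which equals $g\varepsilon$ when $g\in N$ and is $0$ otherwise; therefore $\varepsilon\,\Q G\,\varepsilon=\Q N\varepsilon$. Decomposing $\Q G e=\bigoplus_{s,t\in T}\varepsilon^s\,\Q G\,\varepsilon^t$ and using elements of $G$ that conjugate $\varepsilon^t$ to $\varepsilon^s$ to produce a full set of matrix units yields the Morita-type isomorphism $\Q G e\cong M_n(\varepsilon\,\Q G\,\varepsilon)=M_n(\Q N\varepsilon)\cong M_n(\Q(\zeta_k)*^{\alpha}_{\tau} N/H)$, which is the assertion (and also shows that $e(G,H,K)$ is a primitive central idempotent). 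The main obstacle, as noted, is the careful verification in the second step that the $\alpha$ and $\tau$ read off from the construction coincide with those displayed in the statement.
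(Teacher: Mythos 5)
The paper does not prove this statement: it is quoted verbatim, as a preliminary, from \cite[Proposition 3.4]{Olivieri2004}, so there is no internal proof to compare against. Your argument is essentially the one given in that reference — reduce to $\Q N\varepsilon(H,K)$, identify $\Q H\varepsilon(H,K)\cong\Q(\zeta_k)$ via the faithful-character idempotent of the cyclic group $H/K$, read off the crossed product structure from the section $\phi$, and then use orthogonality of the distinct conjugates together with $\varepsilon\,\Q G\,\varepsilon=\Q N\varepsilon$ to produce matrix units — and it is correct in outline. Two small points deserve explicit attention: (i) for the matrix size to be $n=[G:N_G(K)]$ you need the equality $C_G(\varepsilon(H,K))=N_G(K)$, whereas your centrality argument only gives the inclusion $N_G(K)\subseteq C_G(\varepsilon(H,K))$ (the reverse inclusion follows from the faithfulness of the action of $N/H$, which you do prove, since an element centralizing $\varepsilon$ must normalize $K$; the paper simply quotes this equality from the same source); and (ii) the exponent bookkeeping you flag is real — with the convention $u_g a=\alpha_g(a)u_g$ and $a^b=b\inv ab$, the naive choice $u_{nH}=n^\ast\varepsilon$ produces the action twisted by inversion, so one must choose the isomorphism (or the representatives) so as to land exactly on the displayed formulas.
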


Note that the action $\alpha$ of the crossed product $\Q(\zeta_k)*_{\tau}^{\alpha} N/H$ in Theorem \ref{SSP} is faithful. Therefore the crossed product $\Q(\zeta_k)*_{\tau}^{\alpha} N/H$ can be described as a classical crossed product $(\Q(\zeta_k)/F,\tau)$, where $F$ is the center of the algebra, which is determined by the Galois action $\alpha$.

A subring $\O$ of a finite dimensional $\Q$-algebra $A$ is called an order if it is a finitely generated $\Z$-module such
that $\Q\O=A$. For example, $\Z G$ is an order in $\Q G$ when $G$ is finite. It is known that the intersection of two orders
in $A$ is again an order in $A$ and that, if $\O_1\subseteq \O_2$ are orders in $A$, then the index of their unit groups
$[\U(\O_2):\U(\O_1)]$ is finite \cite{Sehgal1993}. Therefore, for two arbitrary orders $\O_1,\O_2$ in $A$, we have that
$[\U(\O_2):\U(\O_1\cap \O_2)]$ is finite, in other words $\U(\O_1)$ and $\U(\O_2)$ are commensurable. Moreover, the group of
units of an order in a finite dimensional semisimple algebra is finitely generated \cite{1943Siegel,1962BorelHarishChandra}.
Hence the unit group of $\Z G$ is finitely generated if $G$ is a finite group. Recall that in a finitely generated abelian
group replacing generators by powers of themselves yields generators for a subgroup of finite index. We will use these
properties several times in the proofs without explicitly mentioning.

\section{A generalization of the Jespers-Parmenter-Sehgal Theorem}

In this section we prove a generalization of a theorem of Jespers, Parmenter and Sehgal while also avoiding the use of
$K$-theory. For a finite abelian-by-supersolvable group $G$ such that every cyclic subgroup of order not a divisor of 4 or 6,
is subnormal in $G$, the detailed description of the primitive central idempotents of $\Q G$ and the Bass-Milnor Theorem
allow us to show that the group generated by the set of Bass units of $\Z G$ contains a subgroup of finite index in $\mathcal{Z}(\U(\Z G))$.
Furthermore, one obtains a description for the generators of this subgroup.

In order to do this, we first need a new construction for central units based on Bass units in the integral group ring $\Z
G$. 

The idea originates from \cite{JesParSeh1996}, in which the authors constructed central units in $\Z G$ based on Bass units $b\in \Z G$ for finite nilpotent groups $G$. We denote by $Z_i$ the $i$-th center, i.e. $Z_0=1$ and $Z_i\unlhd G$ is defined such that $Z_i/Z_{i-1}=\mathcal{Z}(G/Z_{i-1})$. Since $G$ is nilpotent, $Z_n=G$ for some $n$. For any $g\in G$ and a Bass unit $b$ based on $g$, put $b_{(1)}=b$, and, for $2\leq i \leq n$, put $$b_{(i)}=\prod_{h\in Z_i}b_{(i-1)}^h.$$ By induction, $b_{(i)}$ is independent of the order of the conjugates in the product expression and $b_{(i)}$ is central in $\Z\GEN{Z_i,g}$, since for every $h\in Z_i$ and for every $i$ there exists $x\in Z_{i-1}$ such that $hg=xgh$ and $\GEN{Z_{i-1},g}\unlhd\GEN{Z_i,g}$. In particular, $b_{(n)} \in \mathcal{Z}(\U(\Z G))$.

Note that the previous construction can be modified and improved by considering the subnormal series $\GEN{g}\unlhd\GEN{Z_1,g}\unlhd\cdots\unlhd 
\GEN{Z_n,g}=G$ and taking in each step conjugates in a transversal for $Z_i$ in $Z_{i-1}$. Then, the two constructions differ by a power. The
constructions remain valid when starting with an arbitrary unit $u$ in $\Z G$ with support in an abelian subgroup. 

We now generalize this construction to a bigger class of groups $G$. Throughout $G$ will be a finite abelian-by-supersolvable group such that every cyclic 
subgroup of order not a divisor of 4 or 6, is subnormal in $G$. It is clear that this class of groups contains the finite nilpotent groups, the
dihedral groups $D_{2n}=\GEN{x,y\mid x^n=1=y^2,yxy=x\inv}$ and the generalized quaternion groups $Q_{2n}=\GEN{x,y\mid x^{2n}=1=y^4,x^n=y^2,y\inv
xy=x\inv}$. 

Let $u\in \U(\Z\GEN{g})$, for $g\in G$ of order not a divisor of 4 or 6. We consider a subnormal series $\NN:N_0=\GEN{g}\lhd N_1 \lhd N_2 \lhd \cdots 
\lhd N_m=G$. Now define $\cc{\NN}_0(u)=u$ and $$\cc{\NN}_i(u)=\prod_{h\in T_i}\cc{\NN}_{i-1}(u)^h,$$ where $T_i$ is a transversal for $N_i$ in
$N_{i-1}$. We will prove that this construction is well defined by proving the following three properties.
\begin{lemma}\label{lemmac}
Let $g\in G, u\in\U(\Z\GEN{g})$, $\NN$, $N_i$ and $T_i$ be as above. We have
\begin{enumerate}[label=\rm(\Roman{*}), ref=\Roman{*}]
 \item \label{eq1} $\forall x\in N_i: \cc{\NN}_{i-1}(u)^x\in \Z N_{i-1}$, \\
 \item \label{eq2} $\forall x\in N_{i-1}: \cc{\NN}_{i-1}(u)^x=\cc{\NN}_{i-1}(u)$, \\
 \item \label{eq3} $\cc{\NN}_i(u)$ is independent of the transversal $T_i$. 
\end{enumerate}
\end{lemma}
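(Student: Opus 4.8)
The three statements are naturally intertwined, so I would prove them simultaneously by induction on $i$. For the base case $i=1$, note that $\cc{\NN}_0(u)=u\in\U(\Z\GEN{g})=\U(\Z N_0)$; statement \eqref{eq1} then says $u^x\in\Z N_0$ for all $x\in N_1$, which holds because $N_0=\GEN{g}\lhd N_1$, so conjugation by $x$ maps $N_0$ into itself; statement \eqref{eq2} is trivial since $x\in N_0$ means $u^x=u$ as $u$ has support in the abelian group $\GEN{g}$; and \eqref{eq3} is vacuous at this stage or follows from the induction step below. So the real content is the induction step: assuming \eqref{eq1}, \eqref{eq2}, \eqref{eq3} for index $i$, deduce them for index $i+1$.

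For \eqref{eq1} at level $i+1$: I must show $\cc{\NN}_i(u)^x\in\Z N_i$ for every $x\in N_{i+1}$. By the induction hypothesis, $\cc{\NN}_i(u)=\prod_{h\in T_i}\cc{\NN}_{i-1}(u)^h$ lies in $\Z N_{i-1}\subseteq \Z N_i$, and since $N_i\lhd N_{i+1}$, conjugation by $x\in N_{i+1}$ carries $\Z N_i$ to itself; hence $\cc{\NN}_i(u)^x\in\Z N_i$. (One uses here only normality of $N_i$ in $N_{i+1}$, not the full hypothesis on $u$.) For \eqref{eq2} at level $i+1$: I need $\cc{\NN}_i(u)^x=\cc{\NN}_i(u)$ for all $x\in N_i$. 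Write $\cc{\NN}_i(u)^x=\prod_{h\in T_i}\cc{\NN}_{i-1}(u)^{hx}$. Now $\{hx:h\in T_i\}$ is again a transversal for $N_i$ in $N_{i-1}$ whenever $x\in N_i$ (right multiplication by $x\in N_i$ permutes the right cosets $N_ih$... more precisely, since $x$ normalizes nothing here, I should instead argue that $hx$ ranges over a set of coset representatives because $N_i$ is normal in $N_{i-1}$, so $N_ih x = N_i h'$ iff $h x (h')\inv\in N_i$ iff $N_i h = N_i h'$). Thus $\cc{\NN}_i(u)^x=\prod_{h\in T_i'}\cc{\NN}_{i-1}(u)^{h}$ for the new transversal $T_i'$, and by \eqref{eq3} at level $i$ this equals $\cc{\NN}_i(u)$ — but I must also check the factors commute so the product is well defined, which is exactly where \eqref{eq1} at level $i$ is used: all factors $\cc{\NN}_{i-1}(u)^h$ lie in $\Z N_{i-1}$, hence commute pairwise.

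For \eqref{eq3} at level $i+1$: let $T_{i+1}$ and $T_{i+1}'$ be two transversals for $N_{i+1}$ in $N_i$. Each $t'\in T_{i+1}'$ equals $n_t t$ for a unique $t\in T_{i+1}$ with $n_t\in N_{i+1}$... wait, that is backwards; rather $t'$ and the corresponding $t$ satisfy $N_{i+1}t'=N_{i+1}t$, so $t'=n t$ with $n\in N_{i+1}$, but I want to relate this inside $N_i$. Since the elements being conjugated, $\cc{\NN}_i(u)$, are fixed by conjugation by everything in $N_i$ by \eqref{eq2} at level $i+1$ — no, I want them fixed by $N_{i+1}$; actually \eqref{eq2} at level $i+1$ gives fixed by $N_i$, which includes $N_{i+1}$. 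Hmm, let me restructure: $\cc{\NN}_{i+1}(u)$ computed with $T_{i+1}'$ is $\prod_{t\in T_{i+1}}\cc{\NN}_i(u)^{n_t t}=\prod_{t\in T_{i+1}}(\cc{\NN}_i(u)^{n_t})^{t}$, and since $n_t\in N_{i+1}\leq N_i$, property \eqref{eq2} at level $i+1$ gives $\cc{\NN}_i(u)^{n_t}=\cc{\NN}_i(u)$; so the product is unchanged, again using \eqref{eq1} at level $i+1$ to know all factors lie in $\Z N_i$ and hence commute so reordering is legitimate. The main obstacle, and the place to be careful, is precisely this bookkeeping of commutativity: every time a product over a transversal is manipulated one must first invoke the already-established part \eqref{eq1} at the appropriate level to guarantee the factors lie in a commutative subring, and one must correctly track which normal subgroup in the chain $\NN$ is doing the work at each step. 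Everything else is formal coset manipulation.
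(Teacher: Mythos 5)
Your overall strategy is the paper's: a simultaneous induction on $i$, with (\ref{eq1}) coming from normality of $N_{i-1}$ in $N_i$, (\ref{eq2}) from re-indexing the product over the shifted transversal $T_i x$, and (\ref{eq3}) from (\ref{eq2}). The coset bookkeeping is correct in substance, despite the swapped subgroup names you inherited from the paper's phrase ``transversal for $N_i$ in $N_{i-1}$'' (the cosets in play are those of $N_{i-1}$ in $N_i$, and right multiplication by $x$ permutes them without any normality assumption). The one genuine flaw is at the two places where you must reorder the factors of the product: you justify their pairwise commutativity by saying that they ``all lie in $\Z N_{i-1}$, hence commute pairwise''. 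That inference is invalid: for $i\geq 2$ the group $N_{i-1}$ is in general non-abelian (only $N_0=\GEN{g}$ is guaranteed abelian), so $\Z N_{i-1}$ is a non-commutative ring and membership in it gives no commutativity whatsoever --- if it did, $\Z G$ itself would be commutative and the entire construction would be pointless.

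The factors do commute, but establishing this needs \emph{both} induction hypotheses, which is exactly how the paper words it (``By equations (\ref{eq1}) and (\ref{eq2}) we have that the construction is independent of the order of the conjugates in the product expression''). Concretely: by (\ref{eq2}) at level $i$, $\cc{\NN}_{i-1}(u)$ commutes with every element of $N_{i-1}$ and hence is central in $\Z N_{i-1}$; since $N_{i-1}\lhd N_i$, conjugation by any $h\in N_i$ is a ring automorphism of $\Z N_{i-1}$, so each factor $\cc{\NN}_{i-1}(u)^h$ is again central in $\Z N_{i-1}$; combined with (\ref{eq1}), which places all these factors inside $\Z N_{i-1}$, they commute pairwise. (Equivalently: to see that $\cc{\NN}_{i-1}(u)^{h}$ and $\cc{\NN}_{i-1}(u)^{h'}$ commute, conjugate by $h\inv$ and note that $\cc{\NN}_{i-1}(u)$ is central in $\Z N_{i-1}$ while $\cc{\NN}_{i-1}(u)^{h'h\inv}\in\Z N_{i-1}$ by (\ref{eq1}).) With this repair your induction closes and the argument coincides with the paper's; the remaining index slips (e.g.\ ``$N_{i+1}\leq N_i$'') are notational and harmless.
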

\begin{proof}
It is easy to see that equation (\ref{eq2}) implies (\ref{eq3}). Hence it is sufficient to prove (\ref{eq1}) and (\ref{eq2}).

We prove these by induction on $i$. First assume $i=1$. Then (\ref{eq1}) and (\ref{eq2}) are trivial since the support of $u$ is contained in 
$\GEN{g}=N_0\lhd N_1$.

Now assume the formulas hold for $i-1$. Let $x\in N_i$. Then $\cc{\NN}_{i-1}(u)^x=\prod_{h\in T_{i-1}}\cc{\NN}_{i-2}(u)^{hx}$. By the induction
hypothesis we have that $\cc{\NN}_{i-2}(u)^h\in \Z N_{i-2}$ and since $N_{i-2}\lhd N_{i-1}\lhd N_i$, also $\cc{\NN}_{i-2}(u)^{hx}\in \Z N_{i-1}$,
which proves (\ref{eq1}).

Now let $x\in N_{i-1}$. Then $\cc{\NN}_{i-1}(u)^x=\prod_{h\in T_{i-1}}\cc{\NN}_{i-2}(u)^{hx}=\prod_{h'\in T_{i-1}x}\cc{\NN}_{i-2}(u)^{h'}$. 
$T_{i-1}x$ remains a transversal for $N_{i-1}$ in $N_{i-2}$. Hence, by the induction hypothesis on (\ref{eq3}), the latter equals $\cc{\NN}_{i-1}(u)$
and we have proved (\ref{eq2}).
\end{proof}

By equations (\ref{eq1}) and (\ref{eq2}) we have that the construction is independent of the order of the conjugates in the product expression. 
Furthermore, $\cc{\NN}_m(u)$, the final step in our construction, is a central unit in $\Z G$, which we will simply denote by $\cc{\NN}(u)$.

\begin{theorem}\label{nilpotent+}
Let $G$ be a finite abelian-by-supersolvable group such that every cyclic subgroup of order not a divisor of 4 or 6, is subnormal in $G$. Then 
the group generated by the Bass units of $\Z G$ contains a subgroup of finite index in $\mathcal{Z}(\U(\Z G))$.
\end{theorem}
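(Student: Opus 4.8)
The plan is to reduce the statement to an equality of torsion-free ranks and then to analyse the Wedderburn components of $\Q G$ one by one. First I would observe that, for $g\in G$ of order not a divisor of $4$ or $6$ and any subnormal series $\NN$ from $\GEN{g}$ to $G$ (which exists by hypothesis), the map $\cc{\NN}\colon\U(\Z\GEN{g})\to\mathcal{Z}(\U(\Z G))$ is a group homomorphism: by (\ref{eq1}) and (\ref{eq2}) of Lemma~\ref{lemmac} the factors $\cc{\NN}_{i-1}(u)^{h}$ appearing at level $i$ lie in the commutative ring $\mathcal{Z}(\Z N_{i-1})$, hence commute with one another, and an induction on the length of $\NN$ gives $\cc{\NN}_{i}(uv)=\cc{\NN}_{i}(u)\cc{\NN}_{i}(v)$. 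Since, by the Bass--Milnor theorem, the Bass units generate a subgroup of finite index in $\U(\Z\GEN{g})$, it suffices to prove that the subgroup $\Gamma$ of $\mathcal{Z}(\U(\Z G))$ generated by all the groups $\cc{\NN}(\U(\Z\GEN{g}))$ has finite index in $\mathcal{Z}(\U(\Z G))$; as both are finitely generated, this is equivalent to showing that they have the same rank. Now $G$ is strongly monomial (being abelian-by-supersolvable), so $\Q G=\bigoplus_{(H,K)}\Q Ge(G,H,K)$ over a complete and non-redundant set of strong Shoda pairs, and by Theorem~\ref{SSP} the $(H,K)$-component is $M_{n}(\Q(\zeta_{k})*^{\alpha}_{\tau}N/H)$ with $k=[H:K]$, $N=N_{G}(K)$, of centre the fixed field $F_{(H,K)}=\Q(\zeta_{k})^{\alpha(N/H)}$. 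Hence $\mathcal{Z}(\U(\Z G))$ embeds with finite index into $\bigoplus_{(H,K)}\U(\O_{F_{(H,K)}})$ (because $\mathcal{Z}(\Z G)$ and $\bigoplus\O_{F_{(H,K)}}$ are commensurable orders in $\mathcal{Z}(\Q G)$), so its rank is $\sum_{(H,K)}(r_{(H,K)}+s_{(H,K)}-1)$, where $r_{(H,K)},s_{(H,K)}$ count the real places and pairs of complex places of $F_{(H,K)}$; pairs with $k$ dividing $4$ or $6$ have $F_{(H,K)}$ equal to $\Q$ or an imaginary quadratic field and so contribute $0$. Thus it remains to deal with the strong Shoda pairs $(H,K)$ having $k=[H:K]$ not a divisor of $4$ or $6$.

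Fix such a pair, put $e=e(G,H,K)$ and $F=F_{(H,K)}$, and choose $g\in H$ with $\GEN{g}K=H$, so that $gK$ generates $H/K$; then $k=|gK|$ divides $|g|$, hence $|g|$ is not a divisor of $4$ or $6$, $\GEN{g}$ is subnormal, and we may fix a subnormal series $\NN$. Using Theorem~\ref{SSP} --- under whose isomorphism $\varepsilon(H,K)$ becomes a diagonal matrix unit, $\Q\GEN{g}\varepsilon(H,K)\cong\Q(\zeta_{k})$ is the projection $g\mapsto\zeta_{k}$ onto the $\Q(\zeta_{k})$-summand of $\Q\GEN{g}=\bigoplus_{d\mid|g|}\Q(\zeta_{d})$, conjugation by $N$ acts on this $\Q(\zeta_{k})$ through the Galois action $\alpha$, whose image is $\Gal(\Q(\zeta_{k})/F)$, and conjugation by a transversal of $N$ in $G$ permutes the diagonal matrix units while fixing the centre --- I would compute that for every $w\in\U(\Z\GEN{g})$ one has $p_{e}(\cc{\NN}(w))=N_{\Q(\zeta_{k})/F}(w_{k})^{[H:\GEN{g}]}$, where $p_{e}$ is the projection onto $\Q Ge$ and $w_{k}\in\U(\Z[\zeta_{k}])$ is the image of $w$ under $g\mapsto\zeta_{k}$; in particular $\cc{\NN}$ sends a Bass unit $\Bass{c}{m}{g}$ to a power of the relative norm of the cyclotomic unit $((\zeta_{k}^{c}-1)/(\zeta_{k}-1))^{m}$. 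Since $\U(\Z\GEN{g})$ has finite index in $\bigoplus_{d\mid|g|}\U(\Z[\zeta_{d}])$, which surjects onto $\U(\Z[\zeta_{k}])$, and $N_{\Q(\zeta_{k})/F}$ maps $\U(\Z[\zeta_{k}])$ onto a subgroup of finite index in $\U(\O_{F})$ --- because $N_{\Q(\zeta_{k})/F}(u)=u^{[\Q(\zeta_{k}):F]}$ for $u\in\U(\O_{F})$ and $\U(\O_{F})^{[\Q(\zeta_{k}):F]}$ has finite index in $\U(\O_{F})$ --- the image of $\Gamma$ in $\Q Ge$ has finite index in $\U(\O_{F})$.

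To pass from these component-wise statements to $\Gamma$ itself I would use that $\cc{\NN}$ is a homomorphism and that, up to finite index, $\U(\Z\GEN{g})$ splits as $\bigoplus_{d\mid|g|}\U(\Z[\zeta_{d}])$, so $\Gamma$ contains the images under $\cc{\NN}$ of the units of $\Z\GEN{g}$ supported on a single cyclotomic summand $\Q(\zeta_{d})$ of $\Q\GEN{g}$. Applying the computation above to such a $w$ supported on $\Q(\zeta_{k})$ shows that $p_{e(G,H',K')}(\cc{\NN}(w))=1$ for every strong Shoda pair $(H',K')$ of the chosen set, except possibly for those with $[H':K']=k$ for which some $G$-conjugate of $\GEN{g}$ lies in $H'$ and generates $H'/K'$; for $(H,K)$ and any such inequivalent $(H',K')$, a suitable choice of the generator $g$ of $H/K$ --- one that can be taken inside the kernel of $\Q Ge(G,H',K')$, which is possible because the pairs are inequivalent --- makes $\cc{\NN}(w)$ trivial on $\Q Ge(G,H',K')$ as well. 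In this way one obtains, for each relevant strong Shoda pair, a family of $r_{(H,K)}+s_{(H,K)}-1$ units of $\Gamma$ that are trivial on all components attached to the other pairs of the complete and non-redundant set. The union of these families is multiplicatively independent and has cardinality $\sum_{(H,K)}(r_{(H,K)}+s_{(H,K)}-1)$, so $\Gamma$ has the rank of $\mathcal{Z}(\U(\Z G))$ and therefore finite index in it, which completes the proof.

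The main obstacle is the norm computation $p_{e}(\cc{\NN}(w))=N_{\Q(\zeta_{k})/F}(w_{k})^{[H:\GEN{g}]}$: establishing it requires pushing the product of conjugates $\prod_{h}w^{h}$ into $\varepsilon(H,K)\,\Q Ge\,\varepsilon(H,K)\cong\Q(\zeta_{k})*^{\alpha}_{\tau}N/H$ and using the centrality of $\cc{\NN}(w)$ to conclude that only the Galois norm of its cyclotomic part survives; separating the strong Shoda pairs that share a common index is the second technical point, and everything else is routine bookkeeping with orders, the Bass--Milnor theorem and Dirichlet's unit theorem.
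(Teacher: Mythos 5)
Your strategy is genuinely different from the paper's (which proceeds by induction on $|G|$, exploiting that the subgroups $H$ occurring in strong Shoda pairs of non-commutative components are proper subgroups), but it has two gaps that I do not see how to close. The first is the formula $p_{e}(\cc{\NN}(w))=N_{\Q(\zeta_{k})/F}(w_{k})^{[H:\GEN{g}]}$, which you yourself flag as the main obstacle and do not prove. The difficulty is not mere bookkeeping: $\cc{\NN}(w)$ is a product of conjugates $w^{h}$ taken along the chosen subnormal series $\NN$ from $\GEN{g}$ to $G$, and this series has no reason to be compatible with the chain $\GEN{g}\leq H\leq N_{G}(K)\leq G$. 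Only the full product is central, so you cannot push $\varepsilon(H,K)$ through the factors one at a time; the staged computation (first over $H/\GEN{g}$, landing in $\Q H\varepsilon(H,K)\cong\Q(\zeta_{k})$, then over $N_G(K)/H$ giving the Galois norm, then over $G/N_G(K)$) that would produce a norm is available only for a series passing through $H$ and $N_{G}(K)$ --- and a single series cannot be adapted to every strong Shoda pair simultaneously. A reduced-norm computation shows only that an appropriate power of $p_{e}(\cc{\NN}(w))$ equals a product of Galois conjugates of the images of $w$ at the various eigenvalues of $ge$, with multiplicities coming from the representation, so even the shape of your formula is in doubt.

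The second and more fundamental gap is the passage from component-wise statements to the rank of $\Gamma$. Knowing that the projection of $\Gamma$ to each $\U(\O_{F_{(H,K)}})$ has finite index does \emph{not} give $\mathrm{rank}(\Gamma)=\sum_{(H,K)}(r_{(H,K)}+s_{(H,K)}-1)$: the diagonal copy of $\Z$ in $\Z\times\Z$ surjects onto both factors but has rank $1$. You therefore need elements of $\Gamma$ supported on a single component, and your argument for this --- that $p_{e(G,H',K')}(\cc{\NN}(w))=1$ for most other pairs, and that the remaining ones can be killed by choosing the generator $g$ of $H/K$ ``inside the kernel'' of the other component --- is unjustified. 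A unit $w$ of $\Z\GEN{g}$ supported on the $\Q(\zeta_{d})$-summand of $\Q\GEN{g}$ still projects non-trivially to every component in which $g$ has an eigenvalue of order $d$, and the central product of its conjugates has no reason to be torsion there. This is precisely the difficulty the paper's induction on the group order is designed to overcome: applying the induction hypothesis to the proper subgroup $H$ produces products of Bass units of $\Z H$ of the form $1-\varepsilon(H,K)+\beta\varepsilon(H,K)$, whose $N_{G}(K)$- and $G$-orbits of conjugates multiply to central units that are literally the identity on all components other than $\Q Ge(G,H,K)$. Without a substitute for that step, the independence of your families across different strong Shoda pairs, and hence the rank count, does not go through.
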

\begin{proof}
We argue by induction on the order of the group $G$. For $G=1$ the result is clear. So assume now that the result holds for groups of order strictly 
less than the order of $G$.

Because of the Bass-Milnor Theorem \cite{bass1966}, we can assume that $G$ is non-abelian. Write $\Q G=\Q
G(1-\suma{G'})\bigoplus \Q G\suma{G'} $, where $G'$ is the commutator subgroup of $G$. It is well known that $\Q
G(1-\suma{G'})$ is a direct sum of non-commutative simple rings and $\Q G\suma{G'}\simeq \Q(G/G')$ is a commutative group
ring. Hence, each $z\in \mathcal{Z}(\U(\Z G))$ can be written as $z=z'+z''$, with $z'\in \mathcal{Z}(\U(\Z G (1-\suma{G'})))$
and $z''\in \U(\Z G\suma{G'})$. Note that $z'z''=0=z''z'$. We will prove that some positive power of $z$ is a product of Bass
units. Since $z$ is an arbitrary element of the finitely generated abelian group $\mathcal{Z}(\U(\Z G))$, the result follows.

First we focus on the commutative component. Since $G/G'$ is abelian, it follows from the Bass-Milnor Theorem that the
Bass units of $\Z (G / G')$ generate a subgroup of finite index in $\U(\Z(G/G'))$. A power of each Bass unit of $\Z (G/G')$ is
the natural image of a Bass unit of $\Z G$. Hence, we get that $\overline{z''}^m=\prod_{i=1}^r\overline{b_i}$ for some
positive integer $m$ and some Bass units $b_i$ in $\Z G$, where we denote the natural image of $x\in \Z G$ in $\Z (G/G')$ by 
$\overline{x}$. It is well-known and easy to verify that $\Bass{k}{m}{g}$ has finite order if and only if $k\equiv \pm 1 \mod
|g|$. In particular, there is a Bass unit based on $g\in G$ of infinite order if and only if the order of $g$ is not a
divisor of 4 or 6. Hence we can assume that each $b_i$ is based on an element of order not a divisor of 4 or 6.

By the assumptions on $G$, we can construct central units in $\Z G$ which project to some power of a $\overline{b_i}$ in $\Z
(G/G')$. Indeed, each $\cc{\NN_i}(b_i)$ is central in $\Z G$, where $\NN_i$ is a subnormal series from $\GEN{g_i}$ to $G$
when $b_i$ is based on $g_i$. Since $\Z (G /G')$ is commutative, the natural image of $\cc{\NN_i}(b_i)$ is a power of
$\overline{b_i}$, say $\overline{b_i}^{m_i}$. Hence $\overline{z''}^{m\cdot\lcm(m_i:1\leq i\leq r)}=\prod_{i=1}^r
\overline{\cc{\NN_i}(b_i)}^{\lcm(m_i:1\leq i\leq r)/m_i}$. Hence one may assume there exists some positive integer $m'$ such
that $\overline{z''}^{m'}=\prod_{j=1}^s \overline{\cc{\NN_j}(b_j)}$, where $b_j$ runs through a set of Bass units of $\Z G$
with possible repetition. Therefore, $z^{m'} (\prod_{j=1}^s \cc{\NN_j}(b_j))\inv=z''' + \suma{G'}$, with $z'''\in
\mathcal{Z}(\U(\Z G(1-\suma{G'})))$.

Since $G$ is abelian-by-supersolvable and hence also strongly monomial, we know (see the Preliminaries) that $\Q
G(1-\suma{G'})=\bigoplus_{(H,K)} \Q Ge(G,H,K),$ where $(H,K)$ runs through a complete and non-redundant set of strong Shoda
pairs of $G$ with $\Q Ge(G,H,K)$ not commutative. Note that in particular $H\neq G$ for each such strong Shoda pair.

Let $(H,K)$ be such a strong Shoda pair of $G$. Then it is also a strong Shoda pair of $H$ and $e(H,H,K)=\varepsilon(H,K)$
is a primitive central idempotent of $\Q H$. Since $|H|<|G|$, the induction hypothesis yields that there exists a subgroup
$A_1$ in the group generated by the Bass units in $\Z H$ such that $A_1$ is of finite index in $\mathcal{Z}(\U(\Z H))$.
Clearly, $\Z H\subseteq \bigoplus_e \Z He$, where $e$ runs through all primitive central idempotents of $\Q H$. As both $\Z
H$ and $\bigoplus_e \Z He$ are $\Z$-orders in $\Q H$, we have that $\mathcal{Z}(\U(\Z H))$ is of finite index in
$\mathcal{Z}(\U(\bigoplus_e \Z He))$. Hence, $A_1$ is of finite index in $\mathcal{Z}(\U(\bigoplus_e \Z He))$. Since
$\Z(1-\varepsilon(H,K))+\Z H\varepsilon(H,K)\subseteq \mathcal{Z}(\bigoplus_e \Z H e)$, we thus get that $A=A(H,K)=A_1\cap
\left(\Z(1-\varepsilon(H,K))+\Z H\varepsilon(H,K)\right)$ is of finite index in $\U(\Z(1-\varepsilon(H,K))\bigoplus \Z
H\varepsilon(H,K))$, and each element of $A$ is a product of Bass units of $\Z H$.

From Theorem \ref{SSP} we know that $\Q Ge(G,H,K)\simeq M_{[G:N_G(K)]}(\Q H\varepsilon(H,K)*(N_G(K)/H))$ and its center
consists of the scalar matrices with diagonal entry in $(\Q H\varepsilon(H,K))^{N_G(K)/H}$, the fixed subfield of $\Q
H\varepsilon(H,K)$ under the action of $N_G(K)/H$. Since $\U(\Z H\varepsilon(H,K))$ is a finitely generated abelian group, it
is easy to verify that $\{\prod_{n\in N_G(K)} u^n \mid u\in \U(\Z H\varepsilon(H,K)) \}$ is of finite index in $\U((\Z
H\varepsilon(H,K))^{N_G(K)/H})$.  

Next note that if $\alpha=1-\varepsilon(H,K)+\beta\varepsilon(H,K)\in A$, with $\beta\in \Z H$, then 
$\alpha^n=1-\varepsilon(H,K)+\beta^n\varepsilon(H,K)$, for $n\in N_G(K)$. Hence, $\alpha$ and $\alpha^n$ commute and thus the product $\prod_{n\in
N_G(K)} \alpha^n$ is independent of the order of its factors. It follows from the previous that $B=B(H,K)=\{\prod_{n\in N_G(K)}\alpha^n \mid \alpha
\in A\cap (1-\varepsilon(H,K)+\Z H\varepsilon(H,K))\}$ is a subgroup of finite index of $\U(\Z(1-\varepsilon(H,K))+(\Z H\varepsilon(H,K))^{N_G(K)/H})$
and the elements of $B$ are products of Bass units in $\Z H$.

Let $\gamma=1-\varepsilon(H,K)+\delta\in B$, with $\delta\in(\Z H\varepsilon(H,K))^{N_G(K)/H}$. Let $T$ be a right transversal of $N_G(K)$ in $G$. 
Since $\varepsilon(H,K)^t\varepsilon(H,K)^{t'}=0$ for different $t,t'\in T$, we get that $\gamma^t$ and $\gamma^{t'}$ commute and $\prod_{t\in
T}\gamma^t=1-e(G,H,K)+\sum_{t\in T}\delta^t\in 1-e(G,H,K)+\Z G e(G,H,K)$. Clearly, $\prod_{t\in T}\gamma^t$ corresponds to a central matrix in $\Q
Ge(G,H,K)$ with diagonal entry in $(\Z H\varepsilon(H,K))^{N_G(K)/H}$. From the previous it follows that $C=C(H,K)=\{\prod_{t\in T}\gamma^t \mid
\gamma\in B\}$ is a subgroup of finite index in $\mathcal{Z}\left(\U(\Z(1-e(G,H,K))+ \Z Ge(G,H,K))\right)$. As each $\gamma\in B$ is a product of Bass
units in $\Z H$, so is $\prod_{t\in T}\gamma^t$ a product of Bass units in $\Z G$.

We can now finish the proof as follows. Write the central unit
$$z'''+ \suma{G'} = \sum_{(H,K)} z'''e(G,H,K) +\suma{G'}
= \prod_{(H,K)}(1-e(G,H,K) + z'''e(G,H,K)),$$
where $(H,K)$ runs through a complete and non-redundant set of strong Shoda pairs of $G$ so that $\Q Ge(G,H,K)$ is not commutative and 
$\sum_{(H,K)}e(G,H,K)=1-\suma{G'}$. Because of the construction of $C(H,K)$, there exists a positive integer $m''$ so that
$$(1-e(G,H,K)+z'''e(G,H,K))^{m''}\in C(H,K)$$ for each $(H,K)$. Hence $(z'''+\suma{G'})^{m''}$ and thus also $z^{m'm''}=\left(\prod_{j=1}^s
\cc{\NN_j}(b_j)\right)^{m''}(z'''+\suma{G'})^{m''}$ is a product of Bass units in $\Z G$.
\end{proof}

\begin{remark}\label{remark_construction}
Only one argument in the proof of Theorem \ref{nilpotent+} makes use of the assumption that every cyclic subgroup of order not a divisor of 4 or 6, 
is subnormal in $G$. It is needed to produce a central unit as a product of conjugates of a Bass unit $b$. For that we use the construction
$\cc{\NN}(b)$. It is not clear to us whether an alternative construction exists for other classes of groups, even for metacyclic groups this is
unknown.

At first sight, it looks like we do not use the properties of abelian-by-supersolvable groups, except for the fact that these groups are strongly 
monomial and hence we know an explicit description of the Wedderburn components. However, we can not generalize the proof to strongly monomial groups
since we use an induction hypothesis on subgroups and, unlike the class of abelian-by-supersolvable groups, the class of strongly monomial groups is
not closed under subgroups. 
\end{remark}

\begin{corollary}\label{cor_JPS+}
 Let $G$ be a finite abelian-by-supersolvable group such that every cyclic subgroup of order not a divisor of 4 or 6, is subnormal in $G$. For each 
such cyclic subgroup $\GEN{g}$, fix a subnormal series $\NN_g$ from $\GEN{g}$ to $G$. Then $$\GEN{\cc{\NN_g}(b_g) \mid b_g \mbox{ a Bass unit based
on } g, g\in G}$$ is of finite index in $\mathcal{Z}(\U(\Z G))$.
\end{corollary}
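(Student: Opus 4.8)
The plan is to re-run the induction in the proof of Theorem~\ref{nilpotent+}, this time bookkeeping precisely which units are used and organizing them into the specific central units $\cc{\NN_g}(b_g)$. First note that each $\cc{\NN_g}(b_g)$ is a central unit of $\Z G$ (this is Lemma~\ref{lemmac} together with the remark after it), so $V_G=\GEN{\cc{\NN_g}(b_g)\mid g\in G,\ b_g\text{ a Bass unit based on }g}$ is a subgroup of the finitely generated abelian group $\mathcal{Z}(\U(\Z G))$; hence it suffices to show that $V_G$ contains a subgroup of finite index in $\mathcal{Z}(\U(\Z G))$. I would argue by induction on $|G|$, with $G=1$ trivial and the abelian case handled by the Bass--Milnor Theorem exactly as in Theorem~\ref{nilpotent+}: in a commutative group ring conjugation is trivial, so one step of the construction sends a Bass unit to one of its powers; thus $\overline{\cc{\NN_g}(b_g)}$ is a power of $\overline{b_g}$ and the images of the $\cc{\NN_g}(b_g)$ already generate a subgroup of finite index of $\U(\Z(G/G'))$.

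The conceptual point I would isolate as a lemma is that the construction $\cc{\NN}$ is essentially canonical: for $u\in\U(\Z\GEN g)$ one can expand the iterated products and use that $u$ is centralized by $g$ to see that $\cc{\NN}(u)$ is the product, in some order, of the conjugates $u^x$ as $\GEN g x$ ranges over the right cosets of $\GEN g$ in $G$; consequently $\cc{\NN}(u)$ depends on the subnormal series $\NN$ only up to a power. I would extend this to the ``iterated conjugation'' performed along any chain $\GEN g=G_0\le G_1\le\cdots\le G_k=G$ in which each stage yields a unit centralized by $G_i$ whose $G_{i+1}$-conjugates pairwise commute; the product is then well defined and agrees, up to a power, with $\cc{\NN_g}(u)$ for the fixed series. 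The inputs for this are exactly those already used in the proof of Theorem~\ref{nilpotent+}: for a strong Shoda pair $(H,K)$ the $G$-conjugates of $\varepsilon(H,K)$ are orthogonal and $N_G(K)=C_G(\varepsilon(H,K))$ centralizes $\varepsilon(H,K)$.

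Granting this, the non-commutative part of the argument goes as before. For a strong Shoda pair $(H,K)$ with $\Q Ge(G,H,K)$ non-commutative one has $H\ne G$, so I may apply the inductive form of the corollary to $H$ (the class of groups in play is closed under subgroups, and a cyclic subgroup of $H$ subnormal in $G$ is subnormal in $H$): the units $\cc{\NN^H_g}(b_g)$ of $\Z H$ generate a subgroup of finite index in $\mathcal{Z}(\U(\Z H))$. Running the theorem's construction of $A(H,K)$, then $B(H,K)$ by conjugating over $N_G(K)$ (a genuine further step of the iterated construction, since $H\unlhd N_G(K)$), then $C(H,K)$ by conjugating over a right transversal of $N_G(K)$ in $G$ (the last, non-subnormal, step legitimized by orthogonality of the $\varepsilon(H,K)^t$), and discarding the Bass units based on elements of order dividing $4$ or $6$ (which have finite order), every element of $C(H,K)$ becomes, up to a power, a product of units $\cc{\NN_g}(b_g)$. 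Assembling the Wedderburn components as at the end of the proof of Theorem~\ref{nilpotent+} then places a finite-index subgroup of $\mathcal{Z}(\U(\Z G))$ inside $V_G$, completing the induction.

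The hard part will be the reconciliation lemma of the second paragraph: showing that the iterated construction along $\GEN g\le\cdots\le H\unlhd N_G(K)\le G$, whose last step is conjugation over a subgroup that need not even be subnormal in $G$, produces up to a power the \emph{same} unit as $\cc{\NN_g}(b_g)$ for the \emph{a priori} unrelated fixed series $\NN_g$. This is really a statement that the product of conjugates is independent of all auxiliary choices --- transversals, the order of the factors, and the chain itself --- and proving it cleanly requires combining the triviality of $g$-conjugation on $\Z\GEN g$ with the orthogonality of the conjugate idempotents in just the right way.
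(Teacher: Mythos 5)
Your overall strategy (re-run the induction of Theorem \ref{nilpotent+}, bookkeeping the units, and then reconcile the unit produced along the chain $\GEN{g}\le\cdots\le H\unlhd N_G(K)\le G$ with the one for the fixed series $\NN_g$) could perhaps be made to work, but the step you yourself flag as ``the hard part'' --- the reconciliation lemma --- is exactly where the content lies, and the justification you sketch for it does not go through. From the fact that $\cc{\NN}(u)$ and the unit built along a different chain are products of the same multiset of conjugates $u^x$ (one per coset $\GEN{g}x$) you conclude ``consequently'' that they agree up to a power. This does not follow: the individual conjugates $u^x$ do not commute with one another in $\Z G$; two subnormal series from $\GEN{g}$ to $G$ need not admit a common refinement as chains of subgroups, so no rearrangement argument is available; and the facts you propose to combine (triviality of $g$-conjugation on $\Z\GEN{g}$, orthogonality of the $\varepsilon(H,K)^t$) only control the intermediate partial products \emph{within} each construction separately, not the comparison \emph{between} two differently ordered products. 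The missing ingredient is the reduced norm: both products are central and, being products of the same factors up to conjugation and order, have the same reduced norm in every Wedderburn component $M_{n_e}(D_e)$; hence their quotient is a central element of reduced norm one in each component, hence torsion, and only then do suitable powers coincide. Without this your proof is incomplete.

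Once you have that ingredient, the re-induction is unnecessary, and this is what the paper does. It uses Theorem \ref{nilpotent+} as a black box: the central units of the form $u=b_1b_2\cdots b_m$ with each $b_i$ a Bass unit based on $g_i$ generate a subgroup of finite index in $\mathcal{Z}(\U(\Z G))$, so it suffices to show a power of each such $u$ lies in the group generated by the $\cc{\NN_g}(b_g)$. Since $\cc{\NN_{g_i}}(b_i)$ is a product of $k_i$ conjugates of $b_i$, the elements $\cc{\NN_{g_i}}(b_i)e$ and $b_i^{k_i}e$ have the same reduced norm, so with $k=\lcm(k_i)$ and $m'$ chosen so that everything lands in $\prod_e\GL_{n_e}(\O_e)$, the central unit $u^{km'}\prod_i\cc{\NN_{g_i}}(b_i)^{-m'k/k_i}$ has reduced norm one in each component and is therefore torsion. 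I recommend replacing the re-run induction by this direct reduced-norm comparison; it is also the tool you would need anyway to repair your reconciliation lemma.
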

\begin{proof}
Because $\mathcal{Z}(\U(\Z G))$ is finitely generated and using Theorem \ref{nilpotent+}, it is sufficient to show that if
$u=b_1b_2\cdots b_m\in \mathcal{Z}(\U(\Z G))$, with each $b_i$ a Bass unit based on $g_i\in G$, then there exists a positive
integer $l$ so that $u^l$ is a product of $\cc{\NN_g}(b_g)$'s, with $b_g$ a Bass unit based on $g\in G$. In order to prove
this, for each primitive central idempotent $e$ of $\Q G$, write $\Q Ge=M_{n_e}(D_e)$, with $n_e$ a positive integer and
$D_e$ a division algebra. If $\O_e$ is an order in $D_e$, then we have that $\U(\Z G)\cap \prod_e \GL_{n_e}(\O_e)$ is of
finite index in $\U(\Z G)$, where $\GL_{n_e}(\O_e)$ denotes the group of invertible elements in $M_{n_e}(\O_e)$. It is well
known and easy to verify that the central matrices in $\SL_{n_e}(\O_e)$, consisting of the reduced norm one matrices in
$M_{n_e}(\O_e)$, are torsion.

Now, let $u=b_1b_2\cdots b_m\in \mathcal{Z}(\U(\Z G))$, with each $b_i$ a Bass unit based on $g_i\in G$. Then there exists a positive integer $m'$ 
such that $u^{m'}, \left(\prod_{i=1}^m \cc{\NN_{g_i}}(b_i)\right)^{m'}\in \prod_e \GL_{n_e}(\O_e)$. Let $k_i$ be a positive integer so that each
$\cc{\NN_{g_i}}(b_i)$ is a product of $k_i$ conjugates of $b_i$. Then $\cc{\NN_{g_i}}(b_i)e$ and ${b_i}^{k_i}e$ have the same reduced norm. Hence,
$$u^{km'}\prod_{i=1}^m \cc{\NN_{g_i}}(b_i)^{-m'k/k_i}e\in \SL_{n_e}(\O_e)\cap \mathcal{Z}(\GL_{n_e}(\O_e)),$$ for $k=\lcm(k_i:1\le i\le m)$ and thus
$u^{km'}\prod_{i=1}^m \cc{\NN_{g_i}}(b_i)^{-m'k/k_i}e$ is a torsion element in $\mathcal{Z}(\GL_{n_e}(\O_e))$. Consequently,
$$\left(u^{km'}\prod_{i=1}^m \cc{\NN_{g_i}}(b_i)^{-m'k/k_i}\right)^{m''}=1$$ for some positive integer $m''$, i.e. $u^{km'm''}\in
\GEN{\cc{\NN_{g}}(b_g)\mid b_g \mbox{ a Bass unit based on } g, g\in G}$. 
\end{proof}

For finite nilpotent groups of class $n$, we can always take the subnormal series $\NN_g:\GEN{g}\unlhd\GEN{Z_1,g}\unlhd\cdots\unlhd \GEN{Z_n,g}=G$. 
Since both constructions $\cc{\NN_g}(b)$ and $b_{(n)}$ only differ on a power, we can deduce the Jespers-Parmenter-Sehgal result.

\begin{corollary}(Jespers-Parmenter-Sehgal)\label{cor_JPS}
 Let $G$ be a finite nilpotent group of class $n$. Then $$\GEN{b_{(n)} \mid b \mbox{ a Bass unit in } \Z G}$$ is of finite index in 
$\mathcal{Z}(\U(\Z G))$.
\end{corollary}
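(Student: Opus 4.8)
The plan is to deduce Corollary \ref{cor_JPS} directly from Corollary \ref{cor_JPS+} by making a specific choice of subnormal series. Recall that a finite nilpotent group $G$ of class $n$ has upper central series $1=Z_0\unlhd Z_1\unlhd\cdots\unlhd Z_n=G$, and that in a nilpotent group every subgroup is subnormal; in particular every cyclic subgroup $\GEN{g}$ is subnormal in $G$, so the hypotheses of Corollary \ref{cor_JPS+} are satisfied. Thus for each $g\in G$ we are free to pick the subnormal series $\NN_g:\GEN{g}\unlhd\GEN{Z_1,g}\unlhd\cdots\unlhd\GEN{Z_n,g}=G$ (one checks that $\GEN{Z_{i-1},g}\unlhd\GEN{Z_i,g}$ because $Z_{i-1}$ is normal in $G$ and, modulo $Z_{i-1}$, the image of $Z_i$ is central; this is exactly the observation already made in the paragraph preceding Lemma \ref{lemmac}).

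Next I would compare, for a Bass unit $b$ based on $g$, the element $\cc{\NN_g}(b)$ produced by the iterated-transversal construction with the element $b_{(n)}$ defined via $b_{(1)}=b$ and $b_{(i)}=\prod_{h\in Z_i}b_{(i-1)}^h$. As noted in the remark following the construction of $\cc{\NN}$, taking conjugates over a full subgroup $Z_i$ versus over a transversal of $Z_i$ in $Z_{i-1}$ changes the result only by an overall power: concretely, if at stage $i$ one conjugates over all of $Z_i$ rather than over a transversal $T_i$ of $\GEN{Z_i,g}$ in $\GEN{Z_{i-1},g}$, one repeats each factor $[Z_{i-1}:Z_{i-1}\cap(\text{stabilizer})]$-many extra times, and by parts (\ref{eq1})--(\ref{eq2}) of Lemma \ref{lemmac} all these factors commute, so the two products differ by raising to a fixed positive integer exponent $d$ depending only on $G$ (not on $b$). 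Hence $b_{(n)}=\cc{\NN_g}(b)^{d}$ for a suitable positive integer $d$, and consequently $\GEN{b_{(n)}\mid b\text{ a Bass unit in }\Z G}$ is a subgroup of finite index in $\GEN{\cc{\NN_g}(b_g)\mid b_g\text{ a Bass unit based on }g,\ g\in G}$.

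Finally, combining the two displayed facts — that $\GEN{\cc{\NN_g}(b_g)\mid \ldots}$ has finite index in $\mathcal{Z}(\U(\Z G))$ by Corollary \ref{cor_JPS+}, and that $\GEN{b_{(n)}\mid\ldots}$ has finite index in $\GEN{\cc{\NN_g}(b_g)\mid\ldots}$ — yields that $\GEN{b_{(n)}\mid b\text{ a Bass unit in }\Z G}$ has finite index in $\mathcal{Z}(\U(\Z G))$, which is the claim. I expect the only genuinely non-trivial point to be the bookkeeping that turns ``full-subgroup conjugation versus transversal conjugation'' into ``differ by a fixed power'', and in particular checking that the exponent is uniform in $b$; this is already asserted (without proof) in the paragraph introducing $\cc{\NN}(u)$ and in the sentence immediately preceding the corollary, so it can be invoked, but a careful write-up should spell out that the commuting of the conjugate factors (Lemma \ref{lemmac}(\ref{eq1})--(\ref{eq2})) is what makes the comparison legitimate.
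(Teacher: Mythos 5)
Your proposal is correct and follows essentially the same route as the paper, which deduces the corollary from Corollary \ref{cor_JPS+} by choosing the subnormal series $\NN_g:\GEN{g}\unlhd\GEN{Z_1,g}\unlhd\cdots\unlhd\GEN{Z_n,g}=G$ and observing that $b_{(n)}$ and $\cc{\NN_g}(b)$ differ by a power; you merely spell out the bookkeeping the paper leaves implicit. One small imprecision: the exponent $d$ relating the two constructions depends on the base $g$ of the Bass unit (through indices such as $|Z_i\cap\GEN{Z_{i-1},g}|$), not only on $G$, but since it is uniform over all Bass units based on a fixed $g$ and there are finitely many $g$, the finite-index conclusion is unaffected.
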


\section{Reducing to a basis of products of Bass units for $\mathcal{Z}(\U(\Z G))$}

In this section, we obtain a basis formed by products of Bass units of a free abelian subgroup of finite index in $\mathcal{Z}(\U(\Z G))$, 
for $G$ a finite abelian-by-supersolvable group $G$ such that every cyclic subgroup of order not a divisor of 4 or 6, is subnormal in $G$.

First, we need some properties of our construction of central units.
\begin{lemma}\label{prod_conj}
 Let $G$ be a finite abelian-by-supersolvable group such that every cyclic subgroup of order not a divisor of 4 or 6, is subnormal in $G$. 
Let $u,v$ be units in $\Z\GEN{g}$ for $g\in G$ and let $\NN$ be a subnormal series $N_0=\GEN{g}\lhd N_1\lhd \cdots \lhd N_m=G$. Assume $h\in G$ and
denote by $\NN^h$ the $h$-conjugate of the series $\NN$, i.e. $\NN^h:N_0^h=\GEN{g^h}\lhd N_1^h\lhd\cdots\lhd N_m^h=G$. Then
\begin{enumerate}[label=\rm(\Alph{*}), ref=\Alph{*}]
\item \label{cprod}$\cc{\NN}(uv)=\cc{\NN}(u)\cc{\NN}(v)$, and
\item \label{cconj}$\cc{\NN^h}(u^h)=\cc{\NN}(u)$. 
\end{enumerate}
\end{lemma}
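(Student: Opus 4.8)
The plan is to prove both identities by induction on the length $m$ of the subnormal series $\NN$, mimicking the structure already used in the proof of Lemma~\ref{lemmac}. The base case $m=0$ is immediate: there $\cc{\NN}_0(u)=u$, so (\ref{cprod}) reads $uv=uv$ and (\ref{cconj}) reads $\cc{\NN^h}_0(u^h)=u^h$, which is not yet what we want — so in fact it is cleaner to state and prove the auxiliary claims at each level $i$, namely $\cc{\NN}_i(uv)=\cc{\NN}_i(u)\cc{\NN}_i(v)$ and $\cc{\NN^h}_i(u^h)=\cc{\NN}_i(u)^h$, and only read off (\ref{cconj}) at the top level $i=m$, where $\cc{\NN}_m(u)$ is central so that $\cc{\NN}_m(u)^h=\cc{\NN}_m(u)$.

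For (\ref{cprod}): assuming $\cc{\NN}_{i-1}(uv)=\cc{\NN}_{i-1}(u)\cc{\NN}_{i-1}(v)$, we compute
\[
\cc{\NN}_i(uv)=\prod_{x\in T_i}\cc{\NN}_{i-1}(uv)^x=\prod_{x\in T_i}\bigl(\cc{\NN}_{i-1}(u)^x\cc{\NN}_{i-1}(v)^x\bigr).
\]
By Lemma~\ref{lemmac}(\ref{eq1}) both $\cc{\NN}_{i-1}(u)^x$ and $\cc{\NN}_{i-1}(v)^x$ lie in the commutative ring $\mathcal{Z}(\Z N_{i-1})$ — more precisely they lie in $\Z N_{i-1}$ and, being conjugates of central-in-their-own-ring elements stabilized as in (\ref{eq2}), all these factors commute with one another — so the product splits as $\prod_{x\in T_i}\cc{\NN}_{i-1}(u)^x\cdot\prod_{x\in T_i}\cc{\NN}_{i-1}(v)^x=\cc{\NN}_i(u)\cc{\NN}_i(v)$. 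The one point that needs a line of care is exactly why all the factors $\cc{\NN}_{i-1}(u)^x$, $\cc{\NN}_{i-1}(v)^x$ (ranging over $x\in T_i$) pairwise commute; this follows because each is fixed by conjugation by $N_{i-1}$ (Lemma~\ref{lemmac}(\ref{eq2})) and lies in $\Z N_{i-1}$, and these two facts together with $N_{i-1}\lhd N_i$ force commutativity — this is the same reasoning already invoked just after Lemma~\ref{lemmac} to say the construction is independent of the order of the conjugates.

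For (\ref{cconj}): note first that $T_i^h:=\{x^h\mid x\in T_i\}$ is a transversal for $N_i^h$ in $N_{i-1}^h$, since conjugation by $h$ is an automorphism of $G$ carrying $N_{i-1}$ onto $N_{i-1}^h$ and $N_i$ onto $N_i^h$; by Lemma~\ref{lemmac}(\ref{eq3}) (applied to the series $\NN^h$) the value $\cc{\NN^h}_i(u^h)$ does not depend on which transversal we use, so we may compute it using $T_i^h$. Then, assuming inductively $\cc{\NN^h}_{i-1}(u^h)=\cc{\NN}_{i-1}(u)^h$,
\[
\cc{\NN^h}_i(u^h)=\prod_{y\in T_i^h}\cc{\NN^h}_{i-1}(u^h)^y=\prod_{x\in T_i}\bigl(\cc{\NN}_{i-1}(u)^h\bigr)^{x^h}=\prod_{x\in T_i}\cc{\NN}_{i-1}(u)^{xh}=\Bigl(\prod_{x\in T_i}\cc{\NN}_{i-1}(u)^{x}\Bigr)^h=\cc{\NN}_i(u)^h,
\]
where the middle step uses $a^h{}^{(x^h)}=h\inv a h\, h\inv x\inv h\cdots = a^{xh}$ (associativity of conjugation) and the last step pulls the common conjugation by $h$ out of the product. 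Taking $i=m$ and using that $\cc{\NN}_m(u)=\cc{\NN}(u)$ is central in $\Z G$, hence fixed by conjugation by $h$, gives $\cc{\NN^h}(u^h)=\cc{\NN}(u)^h=\cc{\NN}(u)$.

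I do not expect a serious obstacle here; the proof is a routine induction. The only genuinely delicate point is the bookkeeping in (\ref{cprod}) — making sure the factors commute so that the product over $x\in T_i$ distributes over the product $\cc{\NN}_{i-1}(u)\cc{\NN}_{i-1}(v)$ — and in (\ref{cconj}) the observation that $T_i^h$ is again a transversal (so that invariance under choice of transversal, Lemma~\ref{lemmac}(\ref{eq3}), can be used). Both of these are one-liners once the right auxiliary statement at level $i$ is set up, so the main ``work'' is really just choosing to induct on the partial construction $\cc{\NN}_i$ rather than on $\cc{\NN}$ itself.
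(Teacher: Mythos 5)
Your proof is correct and follows essentially the same route as the paper's: induction on $i$ for the level-$i$ identities $\cc{\NN}_i(uv)=\cc{\NN}_i(u)\cc{\NN}_i(v)$ and $\cc{\NN^h}_i(u^h)=\cc{\NN}_i(u)^h$, using Lemma~\ref{lemmac}(\ref{eq1}) and (\ref{eq2}) for the commutation of the factors in part (\ref{cprod}) and the reindexing $T_i^h\leftrightarrow T_i$ in part (\ref{cconj}). The only difference is that you spell out a couple of steps (why the conjugates commute, and the final passage from $\cc{\NN}(u)^h$ to $\cc{\NN}(u)$ via centrality) that the paper leaves implicit.
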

\begin{proof}
Let $u,v\in \Z\GEN{g}$. Then clearly $\cc{\NN}_0(uv)=uv=\cc{\NN}_0(u)\cc{\NN}_0(v)$. By an induction argument on $i$, we now get that 
$$\cc{\NN}_i(uv)=\prod_{x\in T_i}\cc{\NN}_{i-1}(uv)^x=\prod_{x\in T_i}\cc{\NN}_{i-1}(u)^x\cc{\NN}_{i-1}(v)^x=\cc{\NN}_i(u)\cc{\NN}_i(v),$$ for $i\geq
1$, since $\cc{\NN}_{i-1}(u)^x$ and $\cc{\NN}_{i-1}(v)^x$ commute by properties (\ref{eq1}) and (\ref{eq2}). This proves (\ref{cprod}).

Let $u\in \Z\GEN{g}$ and $h\in G$. We prove that $\cc{\NN^h}_i(u^h)=\cc{\NN}_i(u)^h$ by induction on $i$. For $i=0$ we have
$\cc{\NN^h}_0(u^h)=u^h=\cc{\NN}_0(u)^h$. Let $i\geq 1$, then by the induction hypothesis $$\cc{\NN^h}_i(u^h)=\prod_{x\in
T_i^h}\cc{\NN^h}_{i-1}(u^h)^x=\prod_{x\in T_i^h}\cc{\NN}_{i-1}(u)^{hx}=\prod_{y\in
T_i}\cc{\NN}_{i-1}(u)^{yh}=\cc{\NN}_i(u)^h.$$ 
\end{proof}

Let $G$ be a group. If $g\in G$, we denote by $C_g$ the conjugacy class of $g$ in $G$. $\R$-classes and $\Q$-classes are
a generalization of this. For a given element $g$ in a group $G$ of exponent $e$, the $\R$-class of $g$ is defined as the
union $C_g\cup C_{g\inv}$, and the $\Q$-class of $g$ is defined as the union $\bigcup_{\gcd(r,e)=1} C_{g^r}$. The number of 
$\Q$-classes of a group coincides with the number of conjugacy classes of cyclic subgroups of $G$ and, by a result of Artin, this
number coincides with the number of irreducible rational characters of $G$, i.e. the number of simple components of $\Q G$
\cite[Cor. 39.5, Th. 42.8]{1962CurtisReiner}.

Let $g\in G$ and define $$S_g=\{l\in \U(\Z_{|g|}) : g \mbox{ is conjugate with } g^l \mbox{ in } G\}.$$ In other words, $S_g$ is the image of the 
homomorphism
 $$N_G(\GEN{g})\rightarrow \U(\Z_{|g|}):h\mapsto l_h $$
where $l_h$ is the unique element of $\U(\Z_{|g|})$ such that $g^h=g^{l_h}$. The kernel of this homomorphism is $\Cen_G(g)$. We denote 
$\overline{S_g}=\GEN{S_g,-1}$ and we always assume that transversals of $\overline{S_g}$ in $\U(\Z_{|g|})$ contain the identity $1$.

\begin{theorem}\label{Basis}
Let $G$ be a finite abelian-by-supersolvable group such that every cyclic subgroup of order not a divisor of 4 or 6, is subnormal in $G$. Let $R$ 
denote a set of representatives of $\Q$-classes of $G$. For $g\in R$ choose a transversal $T_g$ of $\overline{S_g}$ in $\U(\Z_{|g|})$ containing 1 and
for every $k\in T_g\setminus\{1\}$ choose an integer $m_{k,g}$ with $k^{m_{k,g}}\equiv 1 \mod |g|$. For every $g\in R$ of order not a divisor of 4 or
6, choose a subnormal series $\NN_g$ from $\GEN{g}$ to $G$. Then
    $$\left\{\cc{\NN_g}(\Bass{k}{m_{k,g}}{g}): g\in R, k\in T_g\setminus\{1\}\right\}$$
is a basis for a free abelian subgroup of finite index in $\mathcal{Z}(\U(\Z G))$.
\end{theorem}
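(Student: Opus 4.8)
By Theorem~\ref{nilpotent+} and Corollary~\ref{cor_JPS+} the group $\Gamma$ generated by all $\cc{\NN_g}(b_g)$, with $g$ ranging over all elements of $G$ of order not a divisor of $4$ or $6$ and $b_g$ a Bass unit based on $g$, already has finite index in $\mathcal{Z}(\U(\Z G))$. So what remains is twofold: first, to cut this (highly redundant) generating set down to the indicated subset indexed by $\Q$-class representatives $g\in R$ and by $k\in T_g\setminus\{1\}$, losing only finite index; and second, to show that this reduced set is $\Z$-linearly independent, i.e. generates a free abelian group of the right rank. Since $\mathcal{Z}(\U(\Z G))=\pm\mathcal{Z}(G)\times T$ with $T$ free abelian of rank equal to (number of simple components of $\Q G$) $-$ (number of simple components of $\Q(G/G')$) $=$ (number of $\Q$-classes of $G$) $-$ (number of $\Q$-classes of $G/G'$) (by Artin, as recalled just before the statement), it suffices to prove that the reduced set generates a subgroup of finite index, has the correct cardinality, and is independent; in fact finite index plus correct cardinality will force independence, so the real content is the finite-index reduction.

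\textbf{Step 1: reducing the exponents $m$.} First I would note that for a fixed $g$ and fixed $k$, equation~(\ref{Basseq6}) gives $\Bass{k}{m}{g}=\Bass{k}{m_0}{g}^{m/m_0}$ whenever $m$ is a multiple of the chosen $m_0=m_{k,g}$, and by Lemma~\ref{prod_conj}(\ref{cprod}) the operation $\cc{\NN_g}$ is multiplicative, so $\cc{\NN_g}(\Bass{k}{m}{g})=\cc{\NN_g}(\Bass{k}{m_0}{g})^{m/m_0}$ lies in the group generated by our chosen element. Since any two admissible exponents have a common multiple, up to finite index the exponent $m$ does not matter: it is enough to keep one element per pair $(g,k)$.

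\textbf{Step 2: reducing the base element $g$ within a $\Q$-class, and reducing $k$.} Here I use the three mechanisms built into the Bass-unit relations together with Lemma~\ref{prod_conj}(\ref{cconj}). If $g$ and $g^h$ are conjugate in $G$ ($h\in N_G(\GEN g)$ say, or more generally any $h\in G$), then $\Bass{k}{m}{g}^h=\Bass{k}{m}{g^h}$, and by (\ref{cconj}), $\cc{\NN_g^h}(\Bass{k}{m}{g^h})=\cc{\NN_g}(\Bass{k}{m}{g})$; since the construction $\cc{}$ is independent of the chosen subnormal series up to a power (indeed up to finite index, arguing as in Corollary~\ref{cor_JPS+} via reduced norms), we may replace $\NN_g^h$ by the fixed series $\NN_{g^h}$ at the cost of finite index. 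So conjugate base elements give the same generator up to finite index, and we may pick one representative per conjugacy class of cyclic subgroups; coupled with relation~(\ref{Basseq1}) (which lets us reindex $k$ modulo $|g|$) and relation~(\ref{Basseq3}), which together with~(\ref{cconj}) express $\cc{\NN_g}(\Bass{kk_1}{m}{g})$ in terms of $\cc{\NN_g}(\Bass{k}{m}{g})$ and $\cc{\NN_{g^k}}(\Bass{k_1}{m}{g^k})$, one sees that it suffices to keep, for each $g\in R$, one $k$ from each coset of $\overline{S_g}=\GEN{S_g,-1}$ in $\U(\Z_{|g|})$ — the generator $-1$ being absorbed because $\Bass{-1}{m}{g}$ is a trivial unit (relation~(\ref{Basseq5})) and $\cc{}$ of a trivial unit is central torsion, hence killed on passing to a finite-index subgroup. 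This is exactly the index set $\{(g,k):g\in R,\ k\in T_g\setminus\{1\}\}$, with $k=1$ dropped because $\Bass 1 m g=1$ by~(\ref{Basseq4}).

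\textbf{Step 3: counting and independence.} It then remains to check that the cardinality of this set equals $\mathrm{rank}\,\mathcal{Z}(\U(\Z G))$. The number of pairs is $\sum_{g\in R}(|T_g|-1)=\sum_{g\in R}\big([\U(\Z_{|g|}):\overline{S_g}]-1\big)$. One shows this sum equals (number of $\Q$-classes of $G$) minus (number of $\Q$-classes of $G/G'$): the terms with $g\in G'$... more precisely, using that $\U(\Z_{|g|})/\overline{S_g}$ counts the $G$-conjugacy classes of generators of $\GEN g$ up to inversion and that these in turn match up with the simple components of $\Q G$ contributed by the $\Q$-class of $g$ versus those of $\Q(G/G')$, a component-by-component bookkeeping against the character-free Wedderburn description gives the claimed equality; this is essentially the same computation as in~\cite{FerrazSimon2008, 2012JdRVG} for the abelian case, now promoted through the strong-Shoda-pair description. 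Given finite index (Steps 1--2) together with matching cardinality, the reduced set is automatically a $\Z$-basis of a free abelian subgroup of finite index in $\mathcal{Z}(\U(\Z G))$.

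\textbf{Main obstacle.} The delicate point is Step~2: making the three reductions (over $m$, over the conjugacy class of $g$, over $k$ modulo $\overline{S_g}$) genuinely land inside the group generated by the \emph{chosen} finite set rather than merely in the ambient $\Gamma$, and in particular controlling that replacing one subnormal series $\NN_g$ by a conjugate series $\NN_g^h$ (or by another choice $\NN_{g^h}$) only changes the resulting central unit by something that is trivial up to finite index. The clean way to handle this is the reduced-norm argument already used in the proof of Corollary~\ref{cor_JPS+}: two central units that are products of the same conjugates of the same Bass units in each simple component differ, in each $\GL_{n_e}(\O_e)$, by a central reduced-norm-one matrix, which is torsion; so up to a uniform power they agree. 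With that lemma in hand, all of Step~2 goes through, and the rest is the (routine but not entirely trivial) rank count of Step~3.
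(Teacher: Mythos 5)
Your overall strategy (sieve the generating set of Corollary \ref{cor_JPS+} down to the indicated set, then prove independence by matching its cardinality against the rank of $\mathcal{Z}(\U(\Z G))$) is the same as the paper's, and your device of comparing different subnormal series via reduced norms is a legitimate substitute for the paper's trick of fixing $\NN_h$ to be a conjugate of $\NN_g$ from the outset. However, Step 3 contains a genuine error that breaks the independence argument. You assert that the free part $T$ of $\mathcal{Z}(\U(\Z G))$ has rank equal to the number of simple components of $\Q G$ minus the number of simple components of $\Q(G/G')$, i.e.\ the number of $\Q$-classes of $G$ minus the number of $\Q$-classes of $G/G'$. This is false: already for $G$ cyclic of order $5$ your formula gives rank $0$, while $\U(\Z C_5)\simeq \pm C_5\times \Z$ has rank $1$ (the commutative components $\Q(\zeta_d)$ contribute Dirichlet rank, which your formula ignores). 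The correct count, which is what the paper uses and what actually matches $\sum_{g\in R}(|T_g|-1)$, is the number of $\R$-classes of $G$ minus the number of $\Q$-classes of $G$ (Ritter--Sehgal, Ferraz); the identification $|T_g|=[\U(\Z_{|g|}):\overline{S_g}]=$ number of $\R$-classes inside the $\Q$-class of $g$ is exactly what makes the bookkeeping close up. Since your whole independence argument is ``finite index plus correct cardinality forces a basis,'' comparing against the wrong rank leaves the conclusion unproved.

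A secondary weakness is in Step 2: the statement that one may keep a single $k$ from each coset of $\overline{S_g}$ is the crux of the reduction, and it requires showing that $\cc{\NN_g}(\Bass{l}{m}{g^k})$ is \emph{torsion} for every $l\in \overline{S_g}$, not merely that it can be rewritten via (\ref{Basseq3}) and (\ref{cconj}). You prove this only for the generator $-1$ (via (\ref{Basseq5})), and for $l\in S_g$ you appeal to ``one sees that.'' The paper's proof makes this a separate claim, established by fixing the uniform exponent $t=\varphi(|G|)$ and telescoping $\cc{\NN_g}(\Bass{l}{t}{g^k})^i=\cc{\NN_g}(\Bass{l^i}{t}{g^k})$ (using that $g^{kl}$ is conjugate to $g^k$ when $l\in S_g$) until $l^t\equiv 1$ kills the unit by (\ref{Basseq1}) and (\ref{Basseq4}). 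The ingredients you list do yield this argument, but as written the key torsion claim is asserted rather than proved, and it cannot be skipped.
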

\begin{proof}
For every $g\in R$ of order not a divisor of 4 or 6, we choose a subnormal series $\NN_g$ from $\GEN{g}$ to $G$. Then for each $h\in G$ of order 
not a divisor of 4 or 6, we agree to choose the subnormal series $\NN_h$ to be the $x$-conjugate of $\NN_g$ when $h=x\inv g^ix$ with $g\in R$ and $i$
coprime to the order of $g$.

By Corollary \ref{cor_JPS+}, the set $$B_1=\{\cc{\NN_h}(\Bass{k}{m}{h}) \mid h\in G, k,m\in \N, k^m\equiv 1 \mod |h| \}$$ generates a subgroup of 
finite index in $\mathcal{Z}(\U(\Z G))$. Let $t=\varphi(|G|)$. We first prove that $$B_2=\left\{\cc{\NN_g}(\Bass{k}{t}{g})\mid g\in R, k\in
T_g\setminus\{1\}\right\}$$ generates a subgroup of finite index in $\mathcal{Z}(\U(\Z G))$. To do so we sieve gradually the list of units in $B_1$,
keeping the property that the remaining units still generate a subgroup of finite index in $\mathcal{Z}(\U(\Z G))$, until the remaining units are the
elements of $B_2$. 

By equation (\ref{Basseq1}), to generate $B_1$ it is enough to use the Bass units of the form $\Bass{k}{m}{h}$ with $h\in G$, $1\le k < |h|$ and 
$k^m \equiv 1 \mod |h|$. Hence one can assume that $k\in \U(\Z_{|h|})$.

By (\ref{Basseq6}), for every Bass unit $\Bass{k}{m}{h}$ we have $\Bass{k}{m}{h}^i = \Bass{k}{t}{h}^j$ for some positive integers $i$ and $j$. 
Thus, by (\ref{cprod}), units of the form $\cc{\NN_h}(\Bass{k}{t}{h})$ with $k\in \U(\Z_{|h|})$ generate a subgroup of finite index in
$\mathcal{Z}(\U(\Z G))$.

By the definition of a $\Q$-class, we know that each $h\in G$ is conjugate to some $g^i$, for $g\in R$ and $(i,|g|)=1$. Hence, by (\ref{Basseq3}), 
(\ref{cprod}) and (\ref{cconj}), we can reduce further the list of generators by taking only Bass units based on elements of $R$.

By (\ref{Basseq4}), we can exclude $k=1$ and still generate a subgroup of finite index in $\mathcal{Z}(\U(\Z G))$.

Let $g\in G$ be of order $n$. We claim that if $l\in \overline{S_g}$ and $k\in \U(\Z_n)$ then $\cc{\NN_g}(\Bass{l}{t}{g^k})$ has finite order. 
As $\Bass{n-l}{t}{g^k}=\Bass{l}{t}{g^k}g^{-lkt}$, by (\ref{Basseq8}), we may assume without loss of generality that $l\in S_g$. By (\ref{Basseq3}),
(\ref{cprod}) and (\ref{cconj}) we have
    $$\cc{\NN_g}(\Bass{l^{i+1}}{t}{g^k}) = \cc{\NN_g}(\Bass{l}{t}{g^k}) \cc{\NN_g}(\Bass{l^i}{t}{g^{kl}}) =  \cc{\NN_g}(\Bass{l}{t}{g^k}) 
\cc{\NN_g}(\Bass{l^i}{t}{g^{k}}).$$
Then, arguing inductively we deduce that
    $$\cc{\NN_g}(\Bass{l}{t}{g^k})^i = \cc{\NN_g}(\Bass{l^i}{t}{g^k}),$$
and in particular $\cc{\NN_g}(\Bass{l}{t}{g^k})^t=\cc{\NN_g}(\Bass{l^t}{t}{g^k})=\cc{\NN_g}(\Bass{1}{t}{g^k})=1$, by (\ref{Basseq1}) and 
(\ref{Basseq4}). This proves the claim.

With $g$ and $n$ as above, every element of $\U(\Z_n)$ is of the form $kl$ with $k\in T_g$ and $l\in \overline{S_g}$. Using (3) again we have 
$\Bass{kl}{t}{g}=\Bass{k}{t}{g}\Bass{l}{t}{g^k}$. By the previous paragraph, $\cc{\NN_g}(\Bass{l}{t}{g^k})$ has finite order. Hence we can reduce the
generating system and take only $k\in T_g\setminus\{1\}$.

The remaining units are exactly the elements of $B_2$. Thus $\GEN{B_2}$ has finite index in $\mathcal{Z}(\U(\Z G))$, as desired.

Let $B=\{\cc{\NN_g}(\Bass{k}{m_{k,C}}{g}) : g\in R, k\in T_G\setminus\{1\}\}$. Using (\ref{Basseq6}) once more, we deduce that $\GEN{B}$ has 
finite index in $\mathcal{Z}(\U(\Z G))$, since $\GEN{B_2}$ does. 

To finish the proof we need to prove that the elements of $B$ are multiplicatively independent. To do so, it is enough to show that the rank 
of $\mathcal{Z}(\U(\Z G))$ coincides with the cardinality of $B$. It is easy to see that $|B|=\left(\sum_{g\in R}|T_g|\right)-|R|$ and $|R|$ equals
the number of $\Q$-classes. By construction, $[\U(\Z_{|g|}):S_g]$ equals the number of conjugacy classes contained in the $\Q$-class of $g$.
Furthermore,  $[\overline{S_g}:S_g]=1$ when $g$ is conjugated to $g\inv$ and $[\overline{S_g}:S_g]=2$ when $g$ is not conjugated to $g\inv$. Therefore
$|T_g|=[\U(\Z_{|g|}):\overline{S_g}]$ is exactly the number of $\R$-classes contained in the $\Q$-class of $g$. Hence $|B|$ equals the number of
$\R$-classes minus the number of $\Q$-classes in $G$. By a result in \cite{2005RitterSehgal,Ferraz2004}, this number coincides with the rank of
$\mathcal{Z}(\U(\Z G))$ and the proof is finished.
\end{proof}

\section{Generalization to strongly monomial groups}

As mentioned in Remark \ref{remark_construction}, it is not known whether Theorem \ref{nilpotent+} remains valid for other classes of groups, 
including metacyclic groups. In this section we construct generalized Bass units and show that the group they generate contains a subgroup of finite
index in the central units of the integral group ring $\Z G$ for finite strongly monomial groups $G$. This generalizes Corollary 2.3 in
\cite{JesPar2011} on generators for central units of the integral group ring of a finite metabelian group.

Let $R$ be an associative ring with identity. Let $x$ be a torsion unit of order $n$. Let $C_n=\GEN{g}$, a cyclic group of order $n$. Then 
the map $g\mapsto x$ induces a ring homomorphism $\Z\GEN{g}\rightarrow R$. If $k$ and $m$ are positive integers with $k^m\equiv 1\mod n$, then the
element $$\Bass{k}{m}{x}=(1+x+\dots + x^{k-1})^{m}+\frac{1-k^m}{n}(1+x+\dots+x^{n-1})$$ is a unit in $R$ since it is the image of a Bass unit in
$\Z\GEN{g}$.

In particular, if $G$ is a finite group, $M$ a normal subgroup of $G$, $g\in G$ and $k$ and $m$ positive integers such that $\gcd(k,|g|)=1$ and 
$k^m \equiv 1 \mod |g|$. Then we have
$$\Bass{k}{m}{1-\suma{M}+g\suma{M}} = 1-\suma{M}+\Bass{k}{m}{g}\suma{M}.$$
Observe that any element $b=\Bass{k}{m}{1-\suma{M}+g\suma{M}}$ is an invertible element of $\Z G(1-\suma{M})+\Z G \suma{M}$. As this is an order 
in $\Q G$, there is a positive integer $n$ such that $b^n\in \U(\Z G)$. Let $n_b$ denote the minimal positive integer satisfying this condition. Then
we call the element $$\Bass{k}{m}{1-\suma{M}+g\suma{M}}^{n_b}=\Bass{k}{mn_b}{1-\suma{M}+g\suma{M}}$$ a generalized Bass unit based on $g$ and $M$ with
parameters $k$ and $m$. Note that we obtain the classical Bass units when $M=1$.

\begin{theorem}\label{strongmonomial}
Let $G$ be a finite strongly monomial group. Then the group generated by the generalized Bass units $b^{n_b}$ with 
$b=\Bass{k}{m}{1-\suma{H'}+h\suma{H'}}$ for a strong Shoda pair $(H,K)$ of $G$ and $h\in H$, contains a subgroup of finite index in 
$\mathcal{Z}(\U(\Z G))$.
\end{theorem}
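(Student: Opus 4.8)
The plan is to reuse the architecture of the proof of Theorem~\ref{nilpotent+} — split off the commutative part, then work component by component, inflating from $H$ to $N_G(K)$ to $G$ — but to replace the single ingredient that there came from the induction on $|G|$ (a finite-index subgroup of $\mathcal{Z}(\U(\Z H))$ made of products of Bass units) by one built directly from generalized Bass units. This is forced on us, since (Remark~\ref{remark_construction}) strongly monomial groups are not closed under subgroups and there is no substitute for $\cc{\NN}(\cdot)$ available. The observation that makes it work is that a generalized Bass unit is automatically central in the group ring of the subgroup it lives in: if $(H,K)$ is a strong Shoda pair of $G$ and $h\in H$, then $H/K$ is abelian, so $H'\subseteq K$ and $\varepsilon(H,K)\suma{H'}=\varepsilon(H,K)$; moreover $b=\Bass{k}{m}{1-\suma{H'}+h\suma{H'}}=1-\suma{H'}+\Bass{k}{m}{h}\suma{H'}$ is central in $\Q H$ because $\suma{H'}$ is a central idempotent of $\Q H$, the ring $\Q H\suma{H'}\cong\Q(H/H')$ is commutative, and $b(1-\suma{H'})=1-\suma{H'}$. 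Since $b^{n_b}\in\Z G\cap\Q H=\Z H$, we get $b^{n_b}\in\mathcal{Z}(\U(\Z H))$, with trivial component on $\Q H(1-\suma{H'})$ and with $\suma{H'}$-component, via $\Z H\suma{H'}\cong\Z(H/H')$ and (\ref{Basseq6}), the image of a Bass unit of $\Z(H/H')$.

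First I would reduce, exactly as in Theorems~\ref{nilpotent+} and~\ref{Basis}, to treating one simple component at a time: since $\Z G$ is commensurable with $\bigoplus_{(H,K)}\Z Ge(G,H,K)$, with $(H,K)$ running over a complete and non-redundant set of strong Shoda pairs, and $\mathcal{Z}(\U(\Z G))$ is finitely generated, it suffices to produce, for each such pair, a finite-index subgroup of $\mathcal{Z}(\U(\Z(1-e(G,H,K))+\Z Ge(G,H,K)))$ all of whose elements are products of generalized Bass units. Because the action of $N_G(K)/H$ on $\Q(\zeta_k)$ in Theorem~\ref{SSP} is faithful, the commutative components are precisely those with $H=G$, and these I would treat verbatim as the commutative part of the proof of Theorem~\ref{nilpotent+}: the generalized Bass units based on pairs $(G,K)$ are central in $\Z G$, are images of Bass units of $\Z G$ in $\Z(G/G')$, and by the Bass--Milnor theorem \cite{bass1966} generate a finite-index subgroup of $\U(\Z(G/G'))=\mathcal{Z}(\U(\Z G\suma{G'}))$.

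For a non-commutative component, fix the strong Shoda pair $(H,K)$ (so $H\neq G$), so that $\varepsilon(H,K)=e(H,H,K)$ is a primitive central idempotent of $\Q H$ and $\Q H\varepsilon(H,K)\cong\Q(\zeta_k)$ is a simple summand of $\Q H\suma{H'}\cong\Q(H/H')$, where $k=[H:K]$. Let $A_H$ be generated by the generalized Bass units $b^{n_b}$ with $b=\Bass{k}{m}{1-\suma{H'}+h\suma{H'}}$, $h\in H$. By the first paragraph, the Bass--Milnor theorem for $H/H'$, equations (\ref{Basseq1}) and (\ref{Basseq6}), and a Dirichlet-type choice of the parameter $k$, these project along $\Z H\to\Z(H/H')$ onto a finite-index subgroup of $\U(\Z(H/H'))$; hence $A_H$ is of finite index in $\U(\Z(1-\suma{H'})+\Z H\suma{H'})$ and consists of central units of $\Z H$ trivial on $\Q H(1-\suma{H'})$. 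Setting $A=A_H\cap(\Z(1-\varepsilon(H,K))+\Z H\varepsilon(H,K))$ and using $\varepsilon(H,K)\leq\suma{H'}$ and commensurability of orders, $A$ is a finite-index subgroup of $\U(\Z(1-\varepsilon(H,K))+\Z H\varepsilon(H,K))$ consisting of central units of $\Z H$ that are trivial on every simple component of $\Q H$ except $\Q H\varepsilon(H,K)$ and are products of generalized Bass units; this is the object playing the role of $A(H,K)$ in Theorem~\ref{nilpotent+}. From here I would copy that proof: $N_G(K)=C_G(\varepsilon(H,K))$ normalizes $H$ and $H'$ and fixes $\varepsilon(H,K)$, so $N_G(K)$-conjugates of an $\alpha\in A$ are again generalized Bass units and, being central in $\Z H$, commute; their symmetrizations $B(H,K)=\{\prod_n\alpha^n:\alpha\in A\}$ over $N_G(K)/H$ are $N_G(K)$-invariant central units of $\Z H$, still products of generalized Bass units, and the $\varepsilon(H,K)$-component of $\prod_n\alpha^n$ is the norm $N_{\Q(\zeta_k)/F}(v)$ of the $\varepsilon(H,K)$-component $v$ of $\alpha$, where $F$ — the fixed field of the faithful $N_G(K)/H$-action on $\Q(\zeta_k)$ — is the center of $\Q Ge(G,H,K)$. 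Finally, conjugating over a right transversal of $N_G(K)$ in $G$ and using orthogonality of the distinct $\varepsilon(H,K)^t$ (the strong Shoda pair condition), the products $C(H,K)=\{\prod_t\gamma^t:\gamma\in B(H,K)\}$ are central units of $\Z G$, trivial off $e(G,H,K)$, products of generalized Bass units, with $e(G,H,K)$-component the scalar $N_{\Q(\zeta_k)/F}(v)$.

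The main obstacle — essentially the only point that is new compared with Theorem~\ref{nilpotent+} — will be to show that $C(H,K)$ has finite index in $\mathcal{Z}(\U(\Z(1-e(G,H,K))+\Z Ge(G,H,K)))$, which amounts to showing that the norm $N_{\Q(\zeta_k)/F}$ maps the unit group of an order in $\Q(\zeta_k)$ onto a finite-index subgroup of the unit group of an order in $F$. I expect this to be easy: for $w$ in an order of $F$ one has $N_{\Q(\zeta_k)/F}(w)=w^{[\Q(\zeta_k):F]}$, so the image contains a fixed power of a finitely generated abelian group. Combined with the finite-index facts established for $A_H$, $A$ and $B(H,K)$, this gives that $C(H,K)$ projects onto a finite-index subgroup of $\mathcal{Z}(\U(\Z Ge(G,H,K)))$, and hence, being trivial off $e(G,H,K)$, has the required finite index. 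Taking the product over all non-commutative $(H,K)$ of the groups $C(H,K)$, together with the subgroup coming from the commutative components, then yields the desired finite-index subgroup of $\mathcal{Z}(\U(\Z G))$ consisting of products of generalized Bass units.
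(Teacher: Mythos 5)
Your proposal is correct and follows essentially the same route as the paper's proof: generalized Bass units are central in $\Z H$ and project to Bass units of $\Z(H/H')$, so the Bass--Milnor theorem gives finite index in $\U(\Z(1-\suma{H'})+\Z H\suma{H'})$, after which one intersects down to $\Z(1-\varepsilon(H,K))+\Z H\varepsilon(H,K)$ and reuses the $A\to B\to C$ symmetrization over $N_G(K)$ and then over a transversal in $G$ exactly as in Theorem~\ref{nilpotent+}, finishing with the componentwise decomposition of an arbitrary central unit. The only (immaterial) differences are that the paper treats all strong Shoda pairs uniformly rather than splitting off the commutative components with $H=G$, and that the norm-map step you flag as the ``main obstacle'' is the same finite-index observation already made and used in the proof of Theorem~\ref{nilpotent+}.
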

\begin{proof}
 Let $(H,K)$ be a strong Shoda pair of $G$. Since $H/H'$ is abelian, it follows from the Bass-Milnor Theorem that the Bass units of $\Z (H/H')$ 
generate a subgroup of finite index in the group of (central) units of $\Z (H/H')\simeq \Z H\suma{H'}$. A power of each Bass unit of $\Z(H/H')$ is the
natural image of a Bass unit in $\Z H$. Hence the group generated by units of the form $b=\Bass{k}{m}{1-\suma{H'}+h\suma{H'}}$, with $h\in H$, is of
finite index in $\U(\Z(1-\suma{H'})+\Z H\suma{H'})$. Then the group generated by the generalized Bass units $b^{n_b}$ is still of finite index in
$\U(\Z(1-\suma{H'})+\Z H\suma{H'})$. Let $A_1$ denote this subgroup. Note that $A_1$ is central in $\Z H$.

 Since $H'\subseteq K\subseteq H$, we know that $\varepsilon(H,K)\in \Z H\suma{H'}$ and hence $\Z(1-\varepsilon(H,K))+\Z H\varepsilon(H,K)\subseteq 
\Z(1-\suma{H'})+\Z H\suma{H'}$. Therefore $A=A_1\cap (\Z(1-\varepsilon(H,K))+\Z H\varepsilon(H,K))$ is of finite index in
$\mathcal{Z}(\U(\Z(1-\varepsilon(H,K))+\Z H\varepsilon(H,K)))$.

 As in the proof of Theorem \ref{nilpotent+}, one obtains that $B=\{\prod_{g\in N_G(K)}\alpha^g \mid \alpha \in A\cap 
(1-\varepsilon(H,K)+\Z H\varepsilon(H,K))\}$ generates a subgroup of finite index in $\U(\Z(1-\varepsilon(H,K))+(\Z H\varepsilon(H,K))^{N_G(K)/H})$.
Furthermore, if $T$ is a transversal of $N_G(H)$ in $G$ then $C=\{\prod_{t\in T}\gamma^t \mid \gamma\in B\}$ generates a subgroup of finite index in
$$\mathcal{Z}\left(\U(\Z(1-e(G,H,K))+ \Z Ge(G,H,K))\right).$$ The products involved in the definition of $B$ do not depend on the order of
multiplication because the factors $\alpha^g$ belong to $\Z(1-\varepsilon(H,K))+\Z H \varepsilon(H,K)$ and this is a commutative ring. To prove that
the product in the definition of $C$ is independent on the order of the product observe that if $\gamma\in B$ and $t_1,t_2\in G$ then
$\gamma=1-\varepsilon(H,K)+\gamma_1 \varepsilon(H,K)$ for some $\gamma_1\in \Z H$. If $t_1\ne t_2$ then $\varepsilon(H,K)^{t_1}
\varepsilon(H,K)^{t_2}=0$, because $(H,K)$ is a strong Shoda pair. Using this it is easy to see that $\gamma^{t_1}$ and $\gamma^{t_2}$ commute.

Take now an arbitrary central unit $u$ in $\mathcal{Z}(\U(\Z G))$. Then we can write this element as follows
$$u=\sum_{(H,K)}ue(G,H,K)=\prod_{(H,K)} (1-e(G,H,K)+ue(G,H,K)),$$ where $(H,K)$ runs through a complete and non-redundant set
of strong Shoda pairs of $G$. Note that conjugates of $b^{n_b}=\Bass{k}{m}{1-\suma{H'}+h\suma{H'}}$ are again of this form
since conjugates of Bass units are again Bass units and because $(H^g,K^g)$ is a strong Shoda pair of $G$ if $(H,K)$ is a
strong Shoda pair of $G$ for $g\in G$. Hence the result follows from the previous paragraph.
\end{proof}

Theorem \ref{strongmonomial} extends the Bass-Milnor Theorem because for abelian groups the generalized Bass units are precisely the Bass units.

\section{Finite groups without exceptional components}

All the results of the previous sections refer to strongly monomial finite groups. In this section we drop this assumption.
Instead we suppose that the Wedderburn decomposition of the rational group algebra does not contain some simple algebras
which
we call exceptional components. An exceptional component of $\Q G$ is an epimorphic image of $\Q G$ which is either a
non-commutative division algebra other than a totally definite quaternion algebra, or a two-by-two matrix algebra over a
division algebra with center either the rationals or a quadratic imaginary extension of $\Q$. Note that our notion of
exceptional is less restrictive than in the references \cite{RS1989,Sehgal1993,JespersLeal1993}. This is possible by using
stronger results on the congruence subgroup problem than those used in the mentioned references (see
Theorem \ref{congruence}).

Let $\O$ be an order in a division ring $D$. For an ideal $Q$ of $\O$ we denote by $E_{n}(Q)$ the subgroup of $\SL_{n}(\O)$
generated by all $Q$-elementary matrices, that is $$E_{n}(Q)=\GEN{I+ qE_{ij} \mid  q\in Q, 1\leq i,j\leq n, i\neq j, E_{ij}
\mbox{ a matrix unit}},$$ where $I$ is the identity matrix in $M_n(\O)$ and $E_{ij}$ denotes the matrix unit having $1$ at
the $(i,j)$-th entry and zeroes elsewhere. For $A,B\in \GL_n(\O)$, we denote $[A,B]=A\inv B\inv AB$.

\begin{lemma}\label{commutators}
 Let $G$ be a finite group such that $\Q G$ has no epimorphic image $M_2(D)$ where the center of $D$ is either $\Q$ or a
quadratic imaginary extension of $\Q$. Denote $\Q G=\bigoplus_e M_{n_e}(D_e)$, where $e$ runs through the primitive central
idempotents of $\Q G$, and let $\O_e$ be an order in $D_e$. Then for every primitive central idempotent $e$ there exists a
non-zero ideal $Q_e$ of $\O_e$ such that $1-e+E_{n_e}(Q_e)\subseteq \U(\Z G)'$.
\end{lemma}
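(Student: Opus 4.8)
The plan is to exhibit, for each primitive central idempotent $e$, explicit elements of $\U(\Z G)'$ that are elementary matrices supported in the block $M_{n_e}(\O_e)$, and then to observe that commutators of such matrices generate a full congruence subgroup $E_{n_e}(Q_e)$ for a suitable non-zero ideal $Q_e$. The starting point is the standard fact that bicyclic units of $\Z G$ produce non-trivial unipotent (hence, up to a commensurability argument, elementary-type) elements in the matrix blocks, together with the commutator identity $[I+aE_{ij},I+bE_{jk}]=I+abE_{ik}+\cdots$ (Steinberg relations) which shows that a single off-diagonal elementary element over a subring can be ``spread out'' into all elementary generators.

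First I would reduce to a single block. Since $\Z G$ is an order in $\Q G=\bigoplus_e M_{n_e}(D_e)$, and $1-e+\Z G e$ is also an order, the groups $\U(\Z G)$ and $\prod_e \GL_{n_e}(\O_e)$ are commensurable; more to the point, $\U(\Z G)'$ contains a subgroup of finite index in $\prod_e \GL_{n_e}(\O_e)'$, and it suffices to produce, separately for each $e$, a non-zero ideal $Q_e$ with $1-e+E_{n_e}(Q_e)\subseteq \U(\Z G)'$. Fix $e$. If $n_e=1$ the block is a division ring and, by the hypothesis excluding certain $M_2(D)$'s (which is not even needed here since $n_e=1$), there is nothing of ``elementary'' type to produce: one sets $Q_e=\O_e$ only if the reduced elementary group is trivial, so in the $n_e=1$ case $E_{n_e}(Q_e)=1$ and the inclusion is vacuous. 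Thus the real content is the case $n_e\ge 2$.

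For $n_e\ge 2$, I would use the hypothesis that $M_{n_e}(D_e)$ is \emph{not} an exceptional $M_2(D)$ with center $\Q$ or imaginary quadratic: this is precisely the hypothesis under which $\SL_{n_e}(\O_e)$ has the \emph{congruence subgroup property} in the strong form (Bass--Milnor--Serre, Vaser\v{s}te\u{\i}n, and the sharpened versions alluded to in the forthcoming Theorem \ref{congruence}), so that $E_{n_e}(Q)$ is a \emph{normal} subgroup of $\GL_{n_e}(\O_e)$ of finite index for every non-zero ideal $Q$, and moreover $E_{n_e}(Q)=[E_{n_e}(\O_e),E_{n_e}(Q)]$ is generated by commutators. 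Combined with the fact that $\GL_{n_e}(\O_e)$ and $\U(\Z G)$ are commensurable, we get that $\U(\Z G)'$, viewed inside $\prod_e\GL_{n_e}(\O_e)$, contains $1-e+E_{n_e}(Q_e)$ for a non-zero ideal $Q_e$ taken small enough to absorb the finite-index discrepancy: explicitly, $Q_e$ should be chosen so that the congruence subgroup $1-e+\ker(\GL_{n_e}(\O_e)\to\GL_{n_e}(\O_e/Q_e))$ lies inside $\U(\Z G)$, and then one invokes $E_{n_e}(Q_e)\subseteq [\GL_{n_e}(\O_e),\GL_{n_e}(\O_e)]\cap (\text{that congruence subgroup})\subseteq \U(\Z G)'$.

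The main obstacle is the verification that the commutators one produces actually lie in $\U(\Z G)'$ rather than merely in $\GL_{n_e}(\O_e)'$; this is exactly why one cannot just quote the congruence subgroup property for $\SL_{n_e}(\O_e)$ but must pass through the commensurability of $\U(\Z G)$ with the product of the $\GL_{n_e}(\O_e)$ and then use the normality and generation-by-commutators of $E_{n_e}(Q_e)$ \emph{inside the arithmetic group}, shrinking $Q_e$ as needed. I expect the write-up to first record the relevant consequence of the (stronger) congruence subgroup theorem as the cited Theorem \ref{congruence}, then pick $Q_e$ from the finite-index condition, and finally chase the inclusion $E_{n_e}(Q_e)=[\,\cdot\,,\,\cdot\,]\subseteq\U(\Z G)'$ block by block, reassembling via $1-e+(\cdot)$.
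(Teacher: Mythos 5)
Your plan has a genuine gap at exactly the point you yourself flag as ``the main obstacle'', and the patch you propose does not close it. The final inclusion you invoke, $E_{n_e}(Q_e)\subseteq [\GL_{n_e}(\O_e),\GL_{n_e}(\O_e)]\cap(\text{congruence subgroup})\subseteq \U(\Z G)'$, is a non sequitur: an element lying both in the commutator subgroup of the \emph{larger} group $\GL_{n_e}(\O_e)$ and in $\U(\Z G)$ need not be a product of commutators of elements of $\U(\Z G)$, no matter how small you shrink $Q_e$. Likewise the identity $E_{n_e}(Q)=[E_{n_e}(\O_e),E_{n_e}(Q)]$ (where it holds) only expresses $E_{n_e}(Q)$ as commutators of matrices in $E_{n_e}(\O_e)$, which are generally not units of $\Z G$. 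What is needed, and what the paper's proof supplies, is a way to realize the elementary generators as commutators of elements \emph{already known to lie in} $\U(\Z G)$; the congruence subgroup property plays no role here (Theorem \ref{congruence} is only used later, in Theorem \ref{TheoremRankU'}). The paper first fixes one exponent $m$ with $1-e+A^m\in\U(\Z G)$ for all $A\in\GL_{n_e}(\O_e)$, and then computes explicit commutators of such $m$-th powers: for $n_e\geq 3$ the Steinberg relation $[1-rE_{ij},1+sE_{jk}]=1+rsE_{ik}$ applied to $m$-th powers of elementary matrices yields $Q_e=(m\O_e)^2$.

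The second, independent gap is the case $n_e=2$, which is where the hypothesis on $\mathcal{Z}(D_e)$ actually does its work and which your argument never really addresses: with only two indices the three-index Steinberg relation is unavailable, and you give no mechanism for manufacturing elementary matrices as commutators. The paper's key idea here is the Dirichlet Unit Theorem: since $\mathcal{Z}(D_e)$ is neither $\Q$ nor an imaginary quadratic field, $\O_e$ contains a \emph{central unit $r_e$ of infinite order}, and the commutator of $1-e+\left(\begin{smallmatrix} r_e^{-1} & 0\\ 0 & 1\end{smallmatrix}\right)^m$ with $1-e+\left(\begin{smallmatrix} 1 & -s\\ 0 & 1\end{smallmatrix}\right)^m$ equals $1+m(r_e^m-1)sE_{12}$ (symmetrically for $E_{21}$), both factors lying in $\U(\Z G)$ by the choice of $m$; centrality of $r_e$ is then what makes $Q_e=m(r_e^m-1)\O_e$ an ideal. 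Your alternative route through normality and finite index of $E_2(Q)$ would in addition require results on commutator subgroups of arithmetic $\SL_2$'s that go beyond the cited Theorem \ref{congruence}. Finally, the bicyclic units you mention at the outset play no role in this lemma.
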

\begin{proof}
Since unit groups of orders are commensurable, for every primitive central idempotent $e$ of $\Q G$, there exists an integer $m$ such that 
$(1-e+A)^m=1-e+A^m\in \U(\Z G)$ for all $A\in \GL_{n_e}(\O_e)$.

If $n_e=1$, there is nothing to prove since $E_1(Q_{e})=\{e\}$ for every ideal $Q_e$ of $\O_e$.

Now consider $n_e=2$. 
In this case, $\mathcal{Z}(D_e)$ is different from $\Q$ and any quadratic imaginary extension of $\Q$. Hence it follows from the Dirichlet Unit
Theorem that $\U(\mathcal{Z}(\O_e))$ is infinite and hence $\O_e$ contains a central unit $r_e$ of infinite order.
It is easy to see that
\begin{eqnarray*}
 \left[1-e+\begin{pmatrix} r_e\inv & 0 \\ 0 & 1 \end{pmatrix}^m, 1-e+\begin{pmatrix} 1 & -s \\ 0 & 1 \end{pmatrix}^m\right] &=& 1+ m(r_e^m-1)s E_{12}, \\
 \left[1-e+\begin{pmatrix} 1 & 0 \\ 0 & r_e\inv \end{pmatrix}^m, 1-e+\begin{pmatrix} 1 & 0 \\ -s & 1 \end{pmatrix}^m \right] &=& 1+ m(r_e^m-1)s E_{21},
\end{eqnarray*} for any $s\in \O_e$. Hence in this case $1-e+E_{2}(m(r_e^m-1)\O_e)\subseteq \U(\Z G)'$. Since $r_e$ is central, $Q_e=m(r_e^m-1)\O_e$ is an ideal of $\O_e$.

Finally assume $n_e\geq 3$. It is easy to verify that
\begin{eqnarray*}
 [1-rE_{ij},1+sE_{jk}]=1+ rsE_{ik}
\end{eqnarray*}
for all $r,s$ in $\O_e$ if $i$, $j$ and $k$ are different. So taking $Q_e=(m\O_e)^2$, it follows that $1-e+E_{n_e}(Q_e)\subseteq \U(\Z G)'$.
\end{proof}

We recall the following celebrated theorem due to Bass, Liehl, Vaser{\v{s}}te{\u\i}n and Venkataramana \cite{Bass1964, Liehl1981,Vaserstein1972, Vaserstein1973, Venkataramana1994}. 

\begin{theorem}\label{congruence}
Let $D$ be a finite dimensional rational division ring and let $\O$ be an order in $D$. If $n\geq 3$ or $n=2$ and $D$ is
different from $\Q$, a quadratic imaginary extension of $\Q$ and a totally definite quaternion algebra with center $\Q$, then
$[\SL_n(\O):E_n(Q)]<\infty$ for any non-zero ideal $Q$ of $\O$.
\end{theorem}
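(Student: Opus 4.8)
The plan is to reduce the assertion to the congruence subgroup problem for $\SL_n$ over $\O$ and then to quote the work of the named authors. First I would isolate the purely elementary part. Since $\O$ is a $\Z$-order, a non-zero ideal $Q$ contains a non-zero rational integer, so $\O/Q$ is finite and hence $\SL_n(\O/Q)$ is finite; writing $\SL_n(\O,Q)=\ker(\SL_n(\O)\to\SL_n(\O/Q))$ for the principal congruence subgroup of level $Q$, this gives $[\SL_n(\O):\SL_n(\O,Q)]<\infty$. Moreover $E_n(Q)\subseteq\SL_n(\O,Q)$, because every generator $I+qE_{ij}$ with $q\in Q$ is congruent to $I$ modulo $Q$ and this property is preserved under products. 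Hence it is enough to prove $[\SL_n(\O,Q):E_n(Q)]<\infty$ for every non-zero ideal $Q$, and this (for all $Q$ simultaneously) is precisely the congruence subgroup property for $\SL_n(\O)$ in its explicit, elementary-matrix form. For $n\geq 3$ one also invokes here the Bass--Vaser{\v{s}}te{\u\i}n stability theorems, which give that $E_n(\O)$ is normal of finite index in $\SL_n(\O)$ and that $E_n(Q)$ is normal in $E_n(\O)$; for $n=2$ the analogous normality facts are part of the cited input.

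Next I would regard $\SL_n(D)$ as the group of rational points of a semisimple algebraic group $\mathbf{G}$ over the number field $K$ equal to the centre of $D$ (the $K$-form of $\SL$ given by reduced norm one in $M_n(D)$), so that $\SL_n(\O)$ is an $S_\infty$-arithmetic subgroup and $\mathrm{rank}_K\mathbf{G}=n-1$. If $n\geq 3$, this $K$-rank is at least $2$, so $\mathbf{G}$ has higher rank and the congruence subgroup property holds by the Bass--Milnor--Serre theorem together with its extension to orders in division algebras; in particular $[\SL_n(\O,Q):E_n(Q)]<\infty$. If $n=2$ the $K$-rank drops to $1$, so one must instead check that the archimedean rank $\sum_{v\mid\infty}\mathrm{rank}_{K_v}\mathbf{G}$ is at least $2$, and the three excluded division rings are exactly the cases where it equals $1$. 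Indeed, every archimedean place contributes rank at least $1$ to $\mathbf{G}=\SL_2(D)$, so if $K$ has two or more archimedean places the total is at least $2$; if $K$ has only one, then either $K=\Q$, in which case $D$ being neither $\Q$ nor a totally definite quaternion algebra forces $D\otimes_\Q\R$ to be isomorphic neither to $\R$ nor to $\H$, whence $\mathbf{G}(\R)$ has rank at least $2$, or else $K$ is imaginary quadratic, in which case $D\neq K$ by hypothesis, so $D$ is a non-commutative division algebra over $K$ and $\mathbf{G}(\C)\cong\SL_{2d}(\C)$ with $d=\deg D\geq 2$ has real rank $2d-1\geq 3$. Once the archimedean rank is known to be at least $2$, the conclusion $[\SL_2(\O,Q):E_2(Q)]<\infty$ is exactly what the theorems of Vaser{\v{s}}te{\u\i}n, Liehl and Venkataramana supply.

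The essential obstacle is the $n=2$ case over a general division ring, and the three exclusions are sharp rather than technical: $\SL_2(\Z)$, the Bianchi groups $\SL_2(\O)$ with $\O$ an order in an imaginary quadratic field, and arithmetic subgroups of $\SL_2(\H)$ all admit finite-index subgroups that are not congruence subgroups, so the conclusion genuinely fails for them. Thus the archimedean rank computation above is where the hypotheses are really used, while the positive direction rests on the full strength of the congruence-subgroup and bounded-generation theorems of Bass, Liehl, Vaser{\v{s}}te{\u\i}n and Venkataramana. Accordingly, in the paper I would state Theorem \ref{congruence} as quoted from \cite{Bass1964,Liehl1981,Vaserstein1972,Vaserstein1973,Venkataramana1994}, carrying out only the elementary reduction of the first paragraph in order to bring it into the $E_n(Q)$-form used in the sequel.
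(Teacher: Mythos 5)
The paper offers no proof of this statement: it is simply recorded as a known theorem of Bass, Liehl, Vaser{\v{s}}te{\u\i}n and Venkataramana with citations, which is exactly what you propose to do in your final paragraph, so your treatment agrees with the paper's. Your supporting sketch (the elementary reduction to the principal congruence subgroup $\SL_n(\O,Q)$ and the archimedean rank analysis isolating the three excluded cases for $n=2$) is sound as orientation, though the aside asserting that the conclusion \emph{genuinely fails} in all three excluded cases overstates the literature --- for $\SL_2(\Z)$ and the Bianchi groups the failure is classical, but for $\SL_2$ of an order in a totally definite quaternion algebra with center $\Q$ the question is generally regarded as open, which is why that case is excluded rather than refuted.
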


\begin{theorem}\label{TheoremRankU'}
Let $G$ be a finite group such that $\Q G$ has no exceptional components and denote by $\pi$ the natural projection
$\pi:\U(\Z G)\rightarrow \U(\Z G)/{\U(\Z G)}'$. Then
for every torsion-free complement $T$ of $\pm \mathcal{Z}(G)$ in $\mathcal{Z}(\U(\Z G))$, we have
\begin{enumerate}
\item $T\cap \U(\Z G)'=1$ and
\item $\pi(T)$ has finite index in $\U(\Z G)/{\U(\Z G)}'$.
\end{enumerate}
Thus $\pi(T)\simeq T$ and $\U(\Z G)/{\U(\Z G)}'$ has the same rank as $\mathcal{Z}(\U(\Z G))$.
\end{theorem}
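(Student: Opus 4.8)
The statement naturally splits into the two numbered assertions, and the final sentence follows formally from them: once we know $T \cap \U(\Z G)' = 1$, the restriction of $\pi$ to $T$ is injective, so $\pi(T) \simeq T$; and once $\pi(T)$ has finite index in $\U(\Z G)/\U(\Z G)'$, the free abelian group $\U(\Z G)/\U(\Z G)'$ (abelianization of a finitely generated group, so finitely generated, and its torsion-free rank is all that matters here) has the same rank as $T$, which by definition of a torsion-free complement has the same rank as $\mathcal{Z}(\U(\Z G))$. So the work is entirely in establishing (1) and (2).

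For part (1), the plan is to use the Wedderburn decomposition $\Q G = \bigoplus_e M_{n_e}(D_e)$ and analyze a central unit $t \in T$ component by component. Writing $t = \sum_e t_e$ with $t_e$ central in $M_{n_e}(D_e)$, each $t_e$ is a scalar matrix $\lambda_e I_{n_e}$ with $\lambda_e \in \mathcal{Z}(D_e)$, so $t_e$ has reduced norm $\lambda_e^{n_e}$ (up to the appropriate power accounting for $[\mathcal{Z}(D_e):\text{relevant field}]$). The key point is that elements of $\U(\Z G)'$ project into $\SL_{n_e}(\O_e)$ in every component — the commutator subgroup kills reduced norms. So if $t \in T \cap \U(\Z G)'$, then each $\lambda_e^{n_e}$ (suitably interpreted) is $1$, forcing $t_e$ to be a central torsion element of $\GL_{n_e}(\O_e)$, hence $t$ itself is torsion; but $T$ is torsion-free, so $t = 1$. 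I expect this to be the cleaner of the two parts, modulo being careful with the reduced norm bookkeeping when $n_e = 1$ versus $n_e \geq 2$ and when $\mathcal{Z}(D_e) \neq \Q$.

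For part (2), the plan is to combine Lemma \ref{commutators}, Theorem \ref{congruence}, and a rank count. By Lemma \ref{commutators}, for each primitive central idempotent $e$ there is a nonzero ideal $Q_e \lhd \O_e$ with $1 - e + E_{n_e}(Q_e) \subseteq \U(\Z G)'$. Since $\Q G$ has no exceptional components, Theorem \ref{congruence} applies in every component with $n_e \geq 2$, giving $[\SL_{n_e}(\O_e) : E_{n_e}(Q_e)] < \infty$; in the components with $n_e = 1$ the unit group of $\O_e$ is, modulo torsion, free abelian of rank equal to the rank of $\mathcal{Z}(\U(\Z (\text{that component})))$, and the division algebra $D_e$ being non-exceptional means it is either a field or a totally definite quaternion algebra, where $\U(\O_e)$ is already virtually central (in the quaternion case $\U(\O_e)/\mathcal{Z}$ is finite). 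Putting these together: modulo a subgroup of finite index, $\U(\Z G)$ is generated by the $\SL_{n_e}(\O_e)$-parts (which lie in $\U(\Z G)'$ up to finite index, via $E_{n_e}(Q_e)$) together with the central part — so the image $\pi(\U(\Z G))$ is, up to finite index, generated by the images of central units, i.e. $\pi(T)$ has finite index. Alternatively, and perhaps more cleanly, one argues that $\U(\Z G)'$ together with $T$ generates a finite-index subgroup of $\U(\Z G)$ by a rank comparison: $\text{rank}\, \U(\Z G)/\U(\Z G)' \geq \text{rank}\, \pi(T) = \text{rank}\, T = \text{rank}\, \mathcal{Z}(\U(\Z G))$ from part (1), while the reverse inequality $\text{rank}\, \U(\Z G)/\U(\Z G)' \leq \text{rank}\, \mathcal{Z}(\U(\Z G))$ comes from the component analysis (each $\GL_{n_e}(\O_e)/\SL_{n_e}(\O_e)$ contributes only its abelian "norm" part, which is accounted for by the center, and $E_{n_e}(Q_e) \subseteq \U(\Z G)'$ finitely covers $\SL_{n_e}(\O_e)$).

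\textbf{Main obstacle.} I expect the principal difficulty to be the rank inequality $\text{rank}\, \U(\Z G)/\U(\Z G)' \leq \text{rank}\, \mathcal{Z}(\U(\Z G))$, i.e. showing that passing to the abelianization does not "lose less" than expected: one must verify that the only source of infinite rank in $\U(\Z G)$ beyond commutators comes from genuinely central directions. This requires knowing that each $\SL_{n_e}(\O_e)$ is, up to finite index, inside $\U(\Z G)'$ — which is exactly where the non-exceptionality hypothesis is used twice over (via Lemma \ref{commutators} to get $E_{n_e}(Q_e) \subseteq \U(\Z G)'$, and via Theorem \ref{congruence} to get $E_{n_e}(Q_e)$ of finite index in $\SL_{n_e}(\O_e)$), together with handling the totally definite quaternion components and the commutative components where $\SL$ is trivial but $\U(\O_e)$ still has rank, all of which is central and hence captured by $\mathcal{Z}(\U(\Z G))$. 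Stitching these local statements into a global finite-index claim about $\U(\Z G)$ itself (as opposed to about $\prod_e \GL_{n_e}(\O_e)$) uses commensurability of $\U(\Z G)$ with $\prod_e \GL_{n_e}(\O_e)$, which was recalled in the Preliminaries.
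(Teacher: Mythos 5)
Your proposal is correct and follows essentially the same route as the paper: part (1) via the observation that a central element of $\U(\Z G)'$ has reduced norm one in every Wedderburn component and is therefore torsion, and part (2) by combining Lemma \ref{commutators} with Theorem \ref{congruence} (plus Kleinert's finiteness of $\SL_1(\O_e)$ for totally definite quaternion components) and the commensurability of $\U(\Z G)$ with $\prod_e \GL_{n_e}(\O_e)$ to show that, modulo $\U(\Z G)'$ and finite index, everything is accounted for by the center. The only detail to make explicit when writing it up is the standard fact that $\SL_{n_e}(\O_e)$ together with $\mathcal{Z}(\GL_{n_e}(\O_e))$ has finite index in $\GL_{n_e}(\O_e)$, which you use implicitly and the paper states via properties of the reduced norm.
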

\begin{proof}
Let $t\in T\cap {\U(\Z G)}'$. Since $t$ is a central element with reduced norm 1 in all components, $t$ is torsion. Since $T$ is torsion-free, $t=1$.
Hence $T\cap {\U(\Z G)}'$ is trivial and $\pi(T)\simeq T$.

 Denote $\Q G=\bigoplus_e M_{n_e}(D_e)$, where each $D_e$ is a division ring and $e$ runs through the primitive central
idempotents of $\Q G$. Let $\O_e$ be an order in $D_e$.

 By Lemma \ref{commutators}, there exists a non-zero ideal $Q_e$ of $\O_e$ such that $1-e + E_{n_e}(Q_e) \subseteq \U(\Z G)'$ for all primitive 
central idempotents $e$ of $\Q G$. Now define $$S= \prod_e (1-e+E_{n_e}(Q_e))\subseteq {\U(\Z G)}'.$$

It is not hard to prove, using properties of the reduced norm, that $\SL_{n_e}(\O_e)$ together with the center of
$\GL_{n_e}(\O_e)$ generate a subgroup of finite index in $\GL_{n_e}(\O_e)$. Because of the assumption on $\Q G$ and Theorem
\ref{congruence}, it follows that also $E_{n_e}(Q_e)$ together with the center of $\GL_{n_e}(\O_e)$ generate a subgroup of
finite index in $\GL_{n_e}(\O_e)$ for $n_e\geq 2$. This is also true when $n_e=1$, since the only allowed non-commutative
division ring is a totally definite quaternion algebra and hence $\SL_1(\O_e)$ is finite by a result of Kleinert \cite[Lemma
21.3]{Sehgal1993}.

 Since $T$ is a free abelian group of finite index in $\mathcal{Z}(\U(\Z G))$, which is commensurable with $\prod_e \mathcal{Z}(\GL_{n_e}(\O_e))$, 
$T$ is isomorphic to a subgroup of finite index in $\prod_e \mathcal{Z}(\GL_{n_e}(\O_e))$.

 Combining these results, one gets that $T$ is isomorphic to a subgroup of finite index in $$\prod_e
\left(1-e+ \GL_{n_e}(\O_e)\right) / S,$$ since $T\cap S=\{1\}$. Now since $S\subseteq {\U(\Z G)}'$, we have that $\pi(T)$ is
of finite index in $\U(\Z G)/{\U(\Z G)}'$. Since $T$ is a free abelian group, it follows that the rank of $\U(\Z G)/{\U(\Z
G)}'$ equals the rank of $\mathcal{Z}(\U(\Z G))$.
\end{proof}

The next theorem can be proved making use of $K$-theory.

\begin{theorem}\label{commutator}
 Let $G$ be a finite group such that $\Q G$ has no exceptional components. Then the natural images of the Bass units of $\Z G$ generate a subgroup 
of finite index in $\U(\Z G)/{\U(\Z G)}'$.
\end{theorem}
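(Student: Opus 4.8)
The plan is to combine Theorem~\ref{TheoremRankU'} with the fact (already established, for a larger class of groups, in Theorem~\ref{nilpotent+} and Corollary~\ref{cor_JPS+}, and classically by Bass--Milnor for abelian groups) that central units built from Bass units generate a subgroup of finite index in $\mathcal{Z}(\U(\Z G))$. However, there is a subtlety: Theorem~\ref{nilpotent+} requires $G$ to be abelian-by-supersolvable with the subnormality hypothesis, which is \emph{not} assumed here. So first I would reduce to the statement that the group $Z$ generated by the Bass units of $\Z G$ together with $\mathcal{Z}(\U(\Z G))$ has finite index over $\mathcal{Z}(\U(\Z G))$ in a way that survives projection; more precisely, what is genuinely needed is only that the natural images in $\U(\Z G)/\U(\Z G)'$ of the Bass units generate a subgroup of finite index, and by Theorem~\ref{TheoremRankU'} that abelian quotient has the same rank as $\mathcal{Z}(\U(\Z G))$, which is a free abelian group of that rank up to torsion.

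The key steps, in order, would be: \textbf{(1)} Fix a torsion-free complement $T$ of $\pm\mathcal{Z}(G)$ in $\mathcal{Z}(\U(\Z G))$, so that by Theorem~\ref{TheoremRankU'} the projection $\pi(T)$ has finite index in $\U(\Z G)/\U(\Z G)'$ and $\pi|_T$ is injective. Thus it suffices to produce, inside the subgroup generated by images of Bass units, a subgroup of finite index in $\pi(T)$, equivalently a subgroup of $\mathcal{Z}(\U(\Z G))$ of finite index whose image under $\pi$ lands in (a finite-index subgroup of) $\pi(T)$ --- and since $\pi(T)$ has finite index in the whole quotient, it is enough to exhibit \emph{any} finite-index subgroup of $\mathcal{Z}(\U(\Z G))$ all of whose elements are, modulo $\U(\Z G)'$, products of images of Bass units. \textbf{(2)} Construct such central units from Bass units directly. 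For each $g \in G$ and each Bass unit $b$ based on $g$, form the product $\tilde b = \prod_{t \in G/C_G(b)} b^t$ of the distinct $G$-conjugates of $b$ (or of a suitable power of $b$, chosen so the conjugates commute after passing to an order): this is central in $\Z G$, and modulo $\U(\Z G)'$ it equals the product of the images of the conjugates $b^t$, each of which is a Bass unit. \textbf{(3)} Show that as $g$ ranges over representatives of the cyclic subgroups (the $\mathbb{Q}$-classes) of $G$ and $b$ over Bass units based on $g$, the units $\tilde b$ generate a subgroup of finite index in $\mathcal{Z}(\U(\Z G))$; this is exactly a rank computation, matching $\mathrm{rk}\,\mathcal{Z}(\U(\Z G))$ against the number of $\R$-classes minus $\Q$-classes as in the proof of Theorem~\ref{Basis}, using \cite{2005RitterSehgal,Ferraz2004}, together with the Bass--Milnor Theorem applied component-by-component after projecting $\Z G$ onto $\bigoplus_e \Z G e$ and using that $K_1$ of an order in a division algebra is, modulo torsion and bounded index, controlled by the reduced norm (so that one only needs the central/commutative information, which Bass--Milnor supplies).

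The main obstacle is step \textbf{(2)}--\textbf{(3)} combined: namely, proving without the abelian-by-supersolvable hypothesis that Bass units of $\Z G$ --- not generalized Bass units, and without the $\cc{\NN}(-)$ construction which needs subnormality --- already give, \emph{after projection modulo $\U(\Z G)'$}, a finite-index subgroup of the commutator quotient. The point that makes this work here, and that I would lean on, is that we no longer need the central units to literally lie in $\Z G$; we only need their images in the abelian group $\U(\Z G)/\U(\Z G)'$, and there the obstruction to symmetrizing a Bass unit into a central one (the reason one needed $\cc{\NN}$) evaporates, because in the abelian quotient the product of conjugates is automatically well-defined and central. Concretely, the reduced norm on each component $M_{n_e}(D_e)$ factors through $\U(\Z G)/\U(\Z G)'$ (up to finite index and torsion, by Theorem~\ref{congruence} and Lemma~\ref{commutators}), so comparing a Bass unit $b$ with its ``symmetrization'' amounts to comparing reduced norms, and $\tilde b$ and $b^{|G:C_G(b)|}$ have equal reduced norm in every component, exactly as in the proof of Corollary~\ref{cor_JPS+}.

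Hence the argument is: take $u \in \mathcal{Z}(\U(\Z G))$; by the Bass--Milnor Theorem applied to the abelianization $\Z(G/G')$ and lifting (as in the proof of Theorem~\ref{nilpotent+}), some power $u^N$ is, modulo $\suma{G'}$, a product of natural images of Bass units $b_i$ of $\Z G$; the remaining ``non-commutative part'' $u^N(\prod b_i)\inv$ lies in $\mathcal{Z}(\U(\Z G(1-\suma{G'})))$, and by Theorem~\ref{TheoremRankU'} it has trivial image in $\U(\Z G)/\U(\Z G)'$ modulo torsion provided its reduced norm in every non-commutative component is trivial after a further power --- which one arranges by multiplying by symmetrized Bass units $\tilde b$ having the same reduced norms, reducing reduced-norm-one central elements to torsion via Kleinert's lemma and Theorem~\ref{congruence}. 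Since the $\tilde b$ are products of conjugate Bass units, their images in the commutator quotient are products of images of Bass units, and we conclude that a power of $\pi(u)$ is a product of images of Bass units; as $\mathcal{Z}(\U(\Z G))$ is finitely generated and $\pi(\mathcal{Z}(\U(\Z G)))$ has finite index in $\U(\Z G)/\U(\Z G)'$ by Theorem~\ref{TheoremRankU'}, the images of the Bass units generate a subgroup of finite index in $\U(\Z G)/\U(\Z G)'$.
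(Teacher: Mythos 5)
There is a genuine gap. The crux of the theorem is to show that the images of the Bass units have \emph{full rank} $k=\mathrm{rank}\,\mathcal{Z}(\U(\Z G))$ in $\U(\Z G)/\U(\Z G)'$, and your steps (2)--(3) and final paragraph never actually establish this for the non-commutative components. The Bass--Milnor Theorem only controls the commutative part $\Z G\suma{G'}\simeq\Z(G/G')$; for a central unit supported on $\Q G(1-\suma{G'})$ you assert that its reduced norms can be cancelled ``by multiplying by symmetrized Bass units $\tilde b$ having the same reduced norms,'' but nothing guarantees that the reduced norms of Bass units generate a subgroup of full rank in those components. The rank count from the proof of Theorem \ref{Basis} (number of $\R$-classes minus number of $\Q$-classes) runs in the wrong direction: it bounds the number of independent generators from \emph{above} and is used there only \emph{after} finite index has already been established via Corollary \ref{cor_JPS+}; it cannot substitute for the existence statement. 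That existence statement is exactly the hard step, which in Theorem \ref{nilpotent+} required the induction over subgroups through strong Shoda pairs (unavailable for arbitrary $G$), and which cannot be recovered by ``Bass--Milnor component-by-component,'' since the non-commutative components are not group rings of abelian groups. (A smaller issue: $\tilde b=\prod_t b^t$ is not central in $\Z G$ because the conjugates need not commute; and modulo $\U(\Z G)'$ one simply has $\tilde b\equiv b^{[G:C_G(b)]}$, so the symmetrization contributes nothing new in the quotient.)

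The paper's proof fills precisely this hole with $K$-theory, and is otherwise very short: the natural map $B\rightarrow\U(\Z G)/\U(\Z G)'\rightarrow K_1(\Z G)$, where $B$ is the group generated by the Bass units, composed with Bass's theorem from \cite{bass1966} that the image of $B$ in $K_1(\Z G)$ has rank equal to $\mathrm{rank}\,K_1(\Z G)=k$ for \emph{every} finite group $G$. Since by Theorem \ref{TheoremRankU'} the quotient $\U(\Z G)/\U(\Z G)'$ also has rank $k$, and the map to $K_1(\Z G)$ factors through it, the image of $B$ in $\U(\Z G)/\U(\Z G)'$ must already have rank $k$ and hence finite index. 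Your proposal correctly identifies Theorem \ref{TheoremRankU'} as one of the two needed inputs, but the second input (Bass's full-rank theorem in $K_1$) is irreplaceable here and is exactly what your reduced-norm bookkeeping silently assumes.
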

\begin{proof}
Let $B=B(G)$ be the group generated by Bass units in $\Z G$ and consider the natural homomorphism $$B\rightarrow \U(\Z G)/\U(\Z G)'\rightarrow K_1(\Z
G).$$ It is well known that the rank of $K_1(\Z G)$ equals the rank of $\mathcal{Z}(\U(\Z G))$, which equals the rank of $\U(\Z G)/\U(\Z G)'$ by
Theorem \ref{TheoremRankU'}. Let $k$ be this common rank. Since the image of $B$ in $K_1(\Z G)$ has rank $k$ \cite{bass1966}, the image of $B$ in
$\U(\Z G)/\U(\Z G)'$ must have rank $k$ too. Hence this image has finite index in $\U(\Z G)/\U(\Z G)'$. 
\end{proof}

\begin{remark}
 The proof of Theorem \ref{TheoremRankU'} together with the results in \cite{JespersLeal1993} show that the bicyclic units
generate a subgroup of finite index in $\U(\Z G)'$ if $G$ is a finite group such that $G$ has no non-abelian homomorphic
image which is fixed point free and $\Q G$ has no exceptional components.
\end{remark}

\begin{corollary}
  Let $G$ be a finite group. Then the natural images of the Bass units of $\Z G$ generate a subgroup of finite index in $\GL_3(\Z G)/{\GL_3(\Z G)}'$.
\end{corollary}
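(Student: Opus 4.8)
The plan is to reduce the statement about $\GL_3(\Z G)$ to the already-proved Theorem~\ref{commutator} about $\U(\Z G)=\GL_1(\Z G)$, by exploiting the fact that $M_3(\Z G)$ is a ring for which $\GL_3(\Z G)$ is the unit group of an order in the semisimple algebra $M_3(\Q G)$, and that passing from $n=1$ to $n=3$ stabilizes $K_1$. First I would observe that $\GL_3(\Z G)$ is (commensurable with) the unit group of the order $M_3(\Z G)$ in $M_3(\Q G)=\bigoplus_e M_{3n_e}(D_e)$. The crucial point is that $M_3(\Q G)$ has \emph{no} exceptional components: a $2\times 2$ matrix algebra over a division ring with small center cannot occur as $M_{3n_e}(D_e)$ since $3n_e\geq 3$, and a non-commutative division algebra obviously cannot occur either. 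Hence Theorem~\ref{commutator}, or rather the argument behind it via Theorem~\ref{TheoremRankU'}, applies to the ring $M_3(\Z G)$ in place of $\Z G$: but I must be careful, because Theorem~\ref{commutator} is stated for group rings $\Z G$. The better route is to invoke $K_1$ directly.

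Concretely, the key steps are: (i) recall the stability isomorphism $K_1(\Z G)\cong \GL_3(\Z G)/E(\Z G)\cap\GL_3(\Z G)$, more precisely that the natural map $\GL_3(\Z G)/\GL_3(\Z G)'\to K_1(\Z G)$ has kernel and cokernel of finite exponent bounded independently of anything relevant here — in fact, since $3\geq 3$, $\GL_3(\Z G)/\GL_3(\Z G)'$ maps onto $K_1(\Z G)$ with finite kernel, by the stability theorems for $K_1$ of orders (this uses precisely that $M_3(\Q G)$ has no exceptional components, via Theorem~\ref{congruence} applied to each $M_{3n_e}(D_e)$); (ii) recall that the rank of $K_1(\Z G)$ equals the rank of $\mathcal Z(\U(\Z G))$, as already used in the proof of Theorem~\ref{commutator}; (iii) the Bass units of $\Z G$, viewed inside $\GL_1(\Z G)\subseteq \GL_3(\Z G)$ via the top-left corner embedding, already have image of rank equal to that of $K_1(\Z G)$, by Bass's theorem \cite{bass1966}; therefore their images in $\GL_3(\Z G)/\GL_3(\Z G)'$ span a subgroup of the same rank as the whole group, which is finitely generated abelian, hence a subgroup of finite index.

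A cleaner way to package (i)--(iii), mirroring the proof of Theorem~\ref{commutator} verbatim, is: let $B=B(G)$ be the group generated by Bass units in $\Z G$, embed $\GL_1(\Z G)\hookrightarrow\GL_3(\Z G)$ and consider
$$B\to \GL_3(\Z G)/\GL_3(\Z G)'\to K_1(\Z G).$$
The composite is the usual map $B\to K_1(\Z G)$, whose image has rank $k:=\mathrm{rank}\,\mathcal Z(\U(\Z G))$ by \cite{bass1966}. On the other hand $\GL_3(\Z G)/\GL_3(\Z G)'$ is finitely generated (it is a quotient of the unit group of the order $M_3(\Z G)$, finitely generated by \cite{1943Siegel,1962BorelHarishChandra}) and its rank is also $k$: indeed the argument of Theorem~\ref{TheoremRankU'}, applied to the semisimple algebra $M_3(\Q G)=\bigoplus_e M_{3n_e}(D_e)$ — which has no exceptional components because each matrix degree $3n_e$ is at least $3$ — shows that a torsion-free complement of the center in $\mathcal Z(\GL_3(\Z G))$ maps to a finite-index subgroup of $\GL_3(\Z G)/\GL_3(\Z G)'$, and this center has the same rank as $\mathcal Z(\U(\Z G))$ since $M_3(\Z G)$ and $\Z G$ have Wedderburn components with the same centers. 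Hence the rank of the image of $B$ in $\GL_3(\Z G)/\GL_3(\Z G)'$ is $k$, forcing that image to have finite index.

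The main obstacle is making step (i) precise without circularity: Theorem~\ref{TheoremRankU'} is phrased for $\U(\Z G)=\GL_1$, so I need to either restate and re-prove its argument for $M_3(\Z G)$ (routine, since Lemma~\ref{commutators} and Theorem~\ref{congruence} are already set up for arbitrary $n$, and for $n\geq 3$ Lemma~\ref{commutators} even gives the elementary subgroup inside the commutator subgroup with no hypothesis beyond $M_3(\Q G)$ having no $M_2$-exceptional components — automatic here), or else cite the standard $K_1$-stability for orders. I expect the former is cleanest: the only thing to check is that $\mathcal Z(\GL_3(\Z G))$ has the same rank as $\mathcal Z(\U(\Z G))$, which is immediate because scalar central matrices in $M_3(\Q G)e$ correspond to central elements of $\Q G e$. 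Everything else is a transcription of the proofs of Theorems~\ref{TheoremRankU'} and~\ref{commutator} with $\Z G$ replaced by $M_3(\Z G)$ and $n_e$ by $3n_e$.
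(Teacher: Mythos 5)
Your proposal is correct and follows essentially the same route as the paper, which simply notes that $M_3(\Q G)=\bigoplus_e M_{3n_e}(D_e)$ has no exceptional components (every matrix degree is at least $3$) and then adapts the proof of Theorem~\ref{commutator}; your write-up just carries out that adaptation explicitly, including the observation that $\mathcal{Z}(\GL_3(\Z G))\cong\mathcal{Z}(\U(\Z G))$ so the rank count goes through unchanged. No gaps.
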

\begin{proof}
 Note that  $M_3(\Q G)$ has no exceptional components. Therefore we can adapt the proof of Theorem \ref{commutator}.
\end{proof}

The Bass units of $\Z\left[\zeta_{|G|}\right]G$ are all the units of the form: $$\Bass{k}{m}{\zeta_{|G|}^ig}=(1+\zeta_{|G|}^ig+\dots + 
(\zeta_{|G|}^ig)^{k-1})^{m}+(1-k^m)\suma{\zeta_{|G|}^ig},$$
where $g\in G$, $\zeta_{|G|}^ig$ has order $n$, $1\leq j\leq |G|$, $k$ is a positive integer with $k^m \equiv 1 \mod n$ and
$\suma{\zeta_{|G|}^ig}=\frac{1}{n}\sum_{j=0}^{n-1}(\zeta_{|G|}^ig)^j$.

\begin{corollary}
 Let $G$ be a finite group of order different from 1, 2, 3, 4 and 6. Then the natural images of the Bass units of $\Z\left[\zeta_{|G|}\right]G$ 
generate a subgroup of finite index in $\U\left(\Z\left[\zeta_{|G|}\right] G\right)/{\U\left(\Z\left[\zeta_{|G|}\right] G\right)}'$.
\end{corollary}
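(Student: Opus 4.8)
The plan is to reduce the statement to an application of Theorem~\ref{commutator} by passing to an auxiliary group whose rational group algebra has no exceptional components. First I would observe that $\Z[\zeta_{|G|}]G$ is naturally isomorphic to $\Z[C\times G]/I$ where $C=\GEN{c}$ is a cyclic group of order $|G|$ and $I$ is the ideal generated by $\Phi_{|G|}(c)$ (equivalently, $\Z[\zeta_{|G|}]G$ is a Wedderburn-type direct summand of $\Z[C\times G]$ corresponding to those simple components of $\Q(C\times G)=\bigoplus_{d\mid |G|}\Q(\zeta_d)G$ with $d=|G|$). More precisely, if $e$ denotes the primitive central idempotent of $\Q C$ with $\Q Ce\simeq \Q(\zeta_{|G|})$, then $\Q(C\times G)e\simeq \Q(\zeta_{|G|})G$ and $\Z[C\times G]e$ is an order in it commensurable with the image of $\Z[C\times G]$; moreover the Bass units of $\Z[C\times G]$ map, under this projection, precisely to (powers of) the Bass units $\Bass{k}{m}{\zeta_{|G|}^i g}$ described before the corollary, since $\Bass{k}{m}{c^i g}e=\Bass{k}{m}{\zeta_{|G|}^i g}$ and integral powers of Bass units are Bass units by equation~(\ref{Basseq6}).

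Next I would analyze the Wedderburn decomposition of $\Q(C\times G)=\Q(\zeta_{|G|})\otimes_\Q \Q G$. Writing $\Q G=\bigoplus_e M_{n_e}(D_e)$, each simple component of $\Q(\zeta_{|G|})G$ is a simple component of $\Q(\zeta_{|G|})\otimes_\Q M_{n_e}(D_e)=M_{n_e}(\Q(\zeta_{|G|})\otimes_\Q D_e)$, hence of the form $M_{n_e}(D')$ with $D'$ a simple component of $\Q(\zeta_{|G|})\otimes_\Q D_e$ whose center $Z'$ contains $\Q(\zeta_{|G|})$. The crucial point is that since $|G|\ne 1,2,3,4,6$, the field $\Q(\zeta_{|G|})$ is neither $\Q$ nor a quadratic imaginary extension of $\Q$: indeed $[\Q(\zeta_{|G|}):\Q]=\varphi(|G|)$, and $\varphi(|G|)\le 2$ forces $|G|\in\{1,2,3,4,6\}$. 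Therefore $Z'\supseteq \Q(\zeta_{|G|})$ cannot be $\Q$ or imaginary quadratic. This immediately rules out components of the form $M_2(D')$ with center $\Q$ or imaginary quadratic. For the division-ring case, a non-commutative division component $D'$ with center $Z'$ would, if exceptional, have to be something other than a totally definite quaternion algebra; but a totally definite quaternion algebra has totally real center, whereas $Z'\supseteq\Q(\zeta_{|G|})$ is not totally real once $|G|>2$ (it has non-trivial complex embeddings). Hence there are no non-commutative division components other than possibly ones which are automatically not exceptional — and in fact for $|G|>2$ there can be no totally definite quaternion component either, but the point we need is simply that $\Q(C\times G)e$, and hence $\Q(\zeta_{|G|})G$, has no exceptional components; equivalently $\Q(C\times G)$ has no exceptional components among the components with center containing $\Q(\zeta_{|G|})$, which is all we use.

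With this in hand I would invoke Theorem~\ref{commutator} applied not to $G$ but to a group: the cleanest route is to note that the argument of Theorem~\ref{commutator} only used that $\Q H$ has no exceptional components and that the Bass units of $\Z H$ have rank equal to the rank of $\U(\Z H)/\U(\Z H)'$; the same $K$-theoretic input of Bass (\cite{bass1966}) gives that the Bass units of $\Z[\zeta_{|G|}]G$ surject onto a subgroup of rank equal to the rank of $K_1(\Z[\zeta_{|G|}]G)$, which by the commutative-semisimple and congruence-subgroup results (Theorem~\ref{congruence}, applicable since no exceptional components occur) equals the rank of $\U(\Z[\zeta_{|G|}]G)/\U(\Z[\zeta_{|G|}]G)'$. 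Alternatively, and more economically for a write-up, one restricts Theorem~\ref{commutator} to the relevant direct summand: apply it to $\Z[C\times G]$, whose relevant summand has no exceptional components, to conclude that the natural images of its Bass units have finite index in $\U(\Z[C\times G]e)/\U(\Z[C\times G]e)'$; then transport along the isomorphism $\Z[C\times G]e\sim \Z[\zeta_{|G|}]G$ and use that Bass units map to Bass units. The main obstacle I anticipate is the bookkeeping needed to justify that one may isolate a single Wedderburn-type summand and still apply Theorem~\ref{TheoremRankU'}/\ref{commutator} verbatim — i.e. checking that the proof of Theorem~\ref{commutator} is insensitive to working inside a block of $\Q G$ rather than all of it, and that the map on Bass units is surjective up to finite index rather than merely well-defined. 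Once that is granted the conclusion is immediate.
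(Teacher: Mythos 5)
Your overall strategy --- show that $\Q(\zeta_{|G|})G$ has no exceptional components and then adapt Theorem~\ref{commutator} --- is exactly the paper's, and your observation that $\varphi(|G|)>2$ for $|G|\neq 1,2,3,4,6$, so that every Wedderburn center contains a field that is neither $\Q$ nor a quadratic imaginary extension of $\Q$, correctly disposes of the exceptional two-by-two components. (The detour through $\Z[C\times G]$ and its block $\Z[C\times G]e$ is harmless but unnecessary: $\Z[\zeta_{|G|}]G$ is already an order in the semisimple algebra $\Q(\zeta_{|G|})G$, and the ingredients of Theorem~\ref{commutator} --- Bass's $K_1$ computation, Lemma~\ref{commutators} and Theorem~\ref{congruence} --- apply verbatim to any such order.)

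There is, however, a genuine gap in the division-algebra case, and your own sentences expose it. You correctly note that a totally definite quaternion algebra has totally real center, while every center occurring here contains $\Q(\zeta_{|G|})$, which is not totally real for $|G|>2$. But the conclusion you draw --- ``hence there are no non-commutative division components other than possibly ones which are automatically not exceptional'' --- is the reverse of what actually follows. What follows is: any non-commutative division component would fail to be a totally definite quaternion algebra and would therefore \emph{be} exceptional. So you still need to show that no non-commutative division components occur at all, and the decomposition $M_{n_e}\left(\Q(\zeta_{|G|})\otimes_\Q D_e\right)$ does not give this by itself: a priori $\Q(\zeta_{|G|})\otimes_\Q D_e$ could have non-split simple factors. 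The missing ingredient is precisely the one the paper invokes: by Brauer's theorem $\Q(\zeta_{|G|})$ is a splitting field of $G$ (it contains $\Q(\zeta_{\exp G})$), so $\Q(\zeta_{|G|})G$ is a direct sum of matrix algebras over the field $\Q(\zeta_{|G|})$ itself and no non-commutative division algebras appear in any component. With that fact inserted, the rest of your argument goes through.
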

\begin{proof}
 When $\zeta_{|G|}$ is a primitive $|G|$-th root of unity, $\Q\left(\zeta_{|G|}\right) G$ has no exceptional components
since $\Q\left(\zeta_{|G|}\right)$ is a splitting field of $G$ \cite{JespersLeal1993}. Therefore we can adapt the proof of Theorem \ref{commutator}.
\end{proof}

\renewcommand{\bibname}{References}
\bibliographystyle{amsalpha}

\bibliography{references}

\end{document}